\documentclass[11pt]{amsart}
\usepackage{amssymb}
\usepackage{amsfonts}

\usepackage{xy}
\xyoption{all}

\usepackage{setspace}

\linespread{1.1}



\newtheorem{theorem}{Theorem}[section]
\newtheorem{corollary}[theorem]{Corollary}
\newtheorem{lemma}[theorem]{Lemma}
\newtheorem{proposition}[theorem]{Proposition}
\newtheorem{definition}[theorem]{Definition}

\newtheorem{remark}[theorem]{Remark}
\newtheorem{openquestion}[theorem]{Open Question}
\newcommand{\Hom}{{\rm Hom}}

\newcommand{\End}{{\rm End}}
\newcommand{\Ext}{{\rm Ext}}

\def\bea{\begin{eqnarray*}}
\def\eea{\end{eqnarray*}}


\topmargin=-3mm \evensidemargin=0in \oddsidemargin=0in


\newcommand{\Aa}{\mathcal{A}}
\newcommand{\Bb}{\mathcal{B}}

\newcommand{\Ff}{\mathcal{F}}

\newcommand{\Ll}{\mathcal{L}}
\newcommand{\Mm}{\mathcal{M}}

\newcommand{\Ss}{\mathcal{S}}

\def\NN{{\mathbb N}}
\def\CC{{\mathbb C}}
\def\KK{{\mathbb K}}

\setlength{\parindent}{0pt}




\begin{document}

\sloppy

\title[Complete Path Algebras and Rational Modules]{Complete Path Algebras and Rational Modules}

\keywords{path algebra, monomial algebra, complete algebra, rational module, semiperfect coalgebra, corelfexive coalgebra}

\dedicatory{\it Dedicated to my mentor, esteemed professor and dear friend, prof. Constantin N\u ast\u asescu, on the occasion his 70th birthday, and to Professor Toma Albu, with deepest admiration.}

\begin{abstract}
We study rational modules over complete path and monomial algebras, and the problem of when rational modules over the dual $C^*$ of a coalgebra $C$ are closed under extensions, equivalently, when is the functor $Rat$ a torsion functor. We show that coreflexivity, closure under extensions of finite dimensional rational modules and of arbitrary modules are Morita invariant, and that they are preserved by subcoalgebras. We obtain new large classes of examples of coalgebras with torsion functor, coming from monomial coalgebras, and answer some questions in the literature.
\end{abstract}

\author{M.C. Iovanov${}^{1,2}$}
\address{${}^1$University of Bucharest, Facultatea de Matematica\\ Str.
Academiei 14, Bucharest 1, RO-010014, Romania}
\address{${}^2$University of Iowa\\
MacLean Hall, Iowa City, Iowa 52246, USA}
\address{e-mail: iovanov@usc.edu, yovanov@gmail.com} 

\thanks{2010 \textit{Mathematics Subject Classification}. Primary 05C25; Secondary 16T15, 18E40, 16T30, 18G15}

\date{}

\maketitle

\section{Introduction and Preliminaries}

Let $Q$ be a quiver, i.e. an oriented graph; loops are allowed and infinitely many arrows between two vertices are also possible. The path algebra (or quiver algebra) $\KK[Q]$ of $Q$ over a field $\KK$ is the $\KK$-vector space spanned by the oriented paths in $Q$, and the multiplication defined by $p*q=pq$ - the concatenation of $p$ and $q$ if the starting point of $q$ is the same as the ending point of $p$, and $p*q=0$ otherwise (note that the opposite multiplication of this is sometimes taken as the path algebra multiplication). Another combinatorial object of importance is the path coalgebra $\KK Q$, which as a vector space is the same as the path algebra, but the comultiplication $\Delta$ is given by $\Delta(r)=\sum\limits_{r=pq}p\otimes q$ and the counit is $\varepsilon(p)=\delta_{0,|p|}$, where $|p|$ denotes the length of $p$, and $\delta$ is the Kroeneker symbol. In \cite{DIN1}, connections between the two are established. One interesting interpretation of the path coalgebra is that the locally nilpotent representations of $Q$ are precisely the comodules over the path coalgebra $\KK Q$. Recall that a representation of $Q$, is equivalently, a module over the path algebra of $Q$; a $\KK[Q]$-module $M$ is called locally nilpotent if every element of $M$ is annihilated by a cofinite monomial ideal, i.e. an ideal $I$ spanned by paths, which are monomials in ``arrows'', and such that $I$ has finite codimension (see also \cite{chin}). 

A third important algebraic object associated sometimes to quivers is the so called complete path algebra. By definition, the complete path algebra of $Q$ is the set of sequences $(\alpha_p)_p$ indexed by oriented paths in $Q$, and with multiplication defined by:
$$(\alpha_p)_p*(\beta_q)_q=(\sum\limits_{r=pq}\alpha_p\beta_q)_r$$
One way to understand this is via the language of coalgebras. If $C=\KK Q$ is the quiver (path) coalgebra of $Q$ and $A=\KK[Q]$ is the path algebra, then $A$ embeds naturally in $C^*$. Via this embedding, $C^*$ can be understood as the completion of $A$ with respect to the topology in which a basis of neighborhoods of $0$ in $A$ is given by cofinite monomial ideals (see \cite{DIN1}). Then $A$ is dense in $C^*$, which is a complete (pseudocompact) algebra, where on $C^*$ one considers the usual finite topology of orthogonals of finite dimensional subspaces of $C$. It is not difficult to see that $C^*$ is then isomorphic to the above mentioned complete path algebra of $Q$ as an algebra, and so this justifies the name.

The category of locally nilpotent representations of $Q$ is then precisely that of rational modules over the complete path algebra $C^*$. The name comes from algebraic geometry; given an algebraic group (scheme) $G$ over $\CC$, the category of rational representations of $G$ is equivalent to that of comodules over the coalgebra (Hopf algebra) of representative functions of $G$, which coincides to the algebra of functions on $G$. Also, in positive characteristic, the (rational) representations of $G$ are the comodules over the hyperalgebra of $G$. More generally, given an algebra-coalgebra pairing $(B,D)$, i.e. a morphism $\varphi:B\rightarrow D^*$, there is a natural notion of rational modules $(B,D)$-modules (see \cite{HR,Rad}), and the category of such rational modules is again a category of comodules over a coalgebra ($D/\varphi(B)^\perp$); this category embeds naturally in the category of $B$-modules. The above situations in algebraic geometry are of this type.

In particular, given a coalgebra $C$, the category $Rat({}_{C^*}\Mm)$ of rational $C^*$-modules (or equivalently, of $C$-comodules) is a subcategory of ${}_{C^*}\Mm$ of modules over $C^*$. This category is closed under coproducts, subobjects and quotients, and provides an interesting example of pre-torsion theory. Many investigations in coalgebras are in fact connected to the relation between rational modules and arbitrary modules over $C^*$. For example, one type of problem studied in this setting is when does the rational part of every module or of every finitely generated module over $C^*$ split off \cite{C,I3,I4,I5,NT}.

One problem of a particular interest is deciding when is this subcategory of rational modules closed under extensions. This has been considered before by many authors \cite{C0, CNO, HR, I2, NT1, L, L2, Rad, Sh, TT}. In general, given an abelian or Grothendieck category $\Aa$ and a closed subcategory $\Bb$ of $A$, one can consider the trace functor $T$ with respect to $\Bb$, which is defined as $T(M)=$the sum of all subobjects of $M$ which belong to $\Bb$. This is a pre-torsion functor or pre-radical. It is well known and easy to see that the subcategory $\Bb$ is closed under extensions, or {\it localizing}, if and only if $T$ is a radical, (or a torsion functor), i.e. $T(M/T(M))=0$ for all $M$ in $\Aa$. Such examples of localizing categories are, of course, important also from the point of view of general noncommutative Gabriel localization theory. One main goal of this paper is to study this problem for the above setting of complete path algebras, and their rational (or locally nilpotent) modules. In fact, we will consider more general algebras, such as ``complete monomial algebras'', which are similar to the complete path algebras, but the indexing set of their elements  is a certain fixed suitable subset of the set all of paths of $Q$.

In view of the above, we recall the following.

\begin{definition}
Let $C$ be a coalgebra. We say that $C$ has a left torsion Rat-functor, if the functor $Rat: {}_{C^*}{\Mm}\rightarrow Rat({}_{C^*}\Mm)=\Mm^C$ is a torsion functor, equivalently, the category of rational left $C^*$-modules is closed under extensions in ${}_{C^*}\Mm$. 
\end{definition}

For such a situation we will also use the alternate terminology ``left rational ($C^*$-)modules are closed under extensions''. We refer the reader to \cite[page 2]{I2} for a brief history of this problem. Many important classes of coalgebras are known to have a (left) torsion rational functor. For example, such are the right semiperfect coalgebras (see \cite{L,NT1}), or if $C$ is such that closed cofinite (equivalently, open) left ideals of $C^*$ (in the finite topology) are finitely generated (in this case $C^*$ is said to be left $\Ff$-Noetherian). Since all the classes of coalgebras with torsion rational functor seemed to be $\Ff$-Noetherian, the authors asked in \cite{CNO} if this is perhaps an equivalent characterization; this motivated the research of \cite{TT}, where a counterexample was produced. 
However, in certain situations this property can be characterized equivalently by $\Ff$-Noetherianity; for example, if the coradical $C_0$ is finite dimensional, then $C$ has a left rational torsion functor if and only if $C^*$ is left $\Ff$-Noetherian, and equivalently, all the terms of the coradical filtration are finite dimensional (see \cite{C0,CNO,HR}). In this situation, the notion becomes left-right symmetric. However, the problem of finding an equivalent characterization for when rational modules are closed under extensions remained open. Also mainly motivated by the interesting developments in \cite{CNO}, such an equivalent characterization was obtained in \cite[Theorem 3.7]{I2}, in the form of a topological condition on the open ideals of $C^*$ and a homological $\Ext$ condition. 

\vspace{.2cm}

{\bf [ \cite{I2}, Theorem 3.7]} {\it A coalgebra $C$ has a left Rational torsion functor (equivalently, left rational $C^*$-modules are closed under extensions in ${}_{C^*}\Mm$), if and only if the finite dimensional rational $C^*$-modules are closed under extensions in ${}_{C^*}\Mm$ (equivalently, equivalently, the set of open ideals of $C^*$ is closed under the product of ideals), and for every simple right $C$-comodule $T$ and every injective indecomposable right $C$-comodule $E$, $\Ext^1(T,E)=0$, where $\Ext^1=\Ext^1_{C^*}$ is taken in the category of left $C^*$-modules.}

\vspace{.2cm}

Using this result in part, a generalization to arbitrary coalgebras of the above equivalent characterizations for the situation when $C_0$ is finite dimensional is given in \cite[Theorem 4.8]{I2}. This allowed one to get simple procedures to generate coalgebras with a torsion Rat functor, as well as coalgebras for which Rat is not torsion, and at the same time, new examples of non-$\Ff$-noetherian algebras $C^*$ with rational torsion functor. Some results in \cite{I2} have the advantage that they allow one to consider examples coming from quivers, and so use combinatorial intuition. However, the conditions of the equivalent characterization of \cite[Theorem 3.7]{I2} are not easy to check, and so these results in their own lead to new open questions. One such question is whether the existence of a torsion left rational functor implies the fact that the right Rat functor is also torsion (\cite[Question 2]{I2}). Another is whether the above homological $\Ext$ condition of \cite[Theorem 3.7]{I2} is in fact always true (\cite[Question 3]{I2}), or perhaps it can be eliminated from \cite[Theorem 3.7]{I2}, which would mean the torsion of the Rat functor is a left right symmetric question (\cite[Question 1]{I2}). 

Here, we aim to provide new large classes examples of coalgebras with rational torsion functor, and perhaps, offer new insights to what is needed to answer these questions. We aim to establish combinatorial conditions on quivers $Q$ or on certain sets of paths of quivers, that can be easily checked and which can precisely tell if the path coalgebra of $Q$ (or more generally, some subcoalgebra) has a torsion Rat functor or not. These will at least show how a potential example of a left and not right Rat-torsion coalgebra should look like (or rather, how it cannot look like). We obtain three such conditions on quiver coalgebras, and more generally, on monomial coalgebras (or path subcoalgebras): Theorem \ref{t.2.fin}, Theorem \ref{t.2.bound} and Theorem \ref{t.1}. In many situations, we will be show that if finite dimensional rational modules are closed under extensions, then arbitrary rational (left or right) modules are also closed under extensions, thus giving a partial answer for large classes of coalgebras, to Questions 1 and 2 from \cite{I2} mentioned above. For example, we show that for a monomial coalgebra $H$ (i.e. a subcoalgebra of the a coalgebra which has a basis of paths), if for every two vertices $v,w$ the length of paths in $H$ between $v$ and $w$ is bounded by some number depending on $v$ and $w$, then the two conditions mentioned above are equivalent, i.e., closure under extensions of finite dimensional rational modules is enough to guarantee closure under extensions of all rational modules. In particular, using also results of \cite{DIN1}, this shows that monomial coalgebras for which between every two vertices there are only finitely many paths (in the monomial generating set) have left and right rational torsion functors. 

We also provide a complete answer to Question 3 from \cite{I2} mentioned above. We consider the ``thick arrow quiver'', by which we understand the quiver with two vertices $a,b$ and countably many arrows, all starting at $a$ and ending at $b$. For this quiver, we are able to precisely compute the above homological $\Ext$ spaces, in terms of the cardinality of the base field $\KK$, and show they are non-zero. The main question still remains, and can be appended with a new question: if the finite dimensional rational $C^*$-modules are closed under extensions in ${}_{C^*}\Mm$ for the coalgebra $C$, or more generally, if $C$ is locally finite, does it follow that $\Ext^1_{C^*}(T,E)=0$ for simple comodules $T$ and injective indecomposable comodules $E$?

In fact, it was noted in \cite{I2} that the problem of when is Rat a torsion functor is closely related to the notion of coreflexivity for a coalgebra $C$. Recall that $C$ is coreflexive if and only if the canonical embedding $C\hookrightarrow (C^*)^0$ is an isomorphism. There are many equivalent characterizations of coreflexive coalgebras; for example, a coalgebra is coreflexive if and only if every finite dimensional module over $C^*$ is rational, or, equivalently, every cofinite ideal of $C^*$ is closed in the finite topology on $C^*$ (see \cite{HR, Rad, RAD}). A coalgebra $C$ is coreflexive if and only if $C_0$ is coreflexive and the finite dimensional rational modules are closed under extensions in ${}_{C^*}\Mm$  (equivalently, the set of open ideals of $C^*$ is closed under the product of ideals; see \cite[Proposition 2.1 \& Corollary 2.2]{I2}). This leads us to and motivates the introduction of the following 

\begin{definition}
We say that a coalgebra $C$ is {\it directly coreflexive} if the finite dimensional rational left (equivalently, right) $C^*$-modules are closed under extensions in ${}_{C^*}\Mm$, equivalently, the set of closed cofinite (=open) ideals of $C^*$ is closed under products (see \cite[Proposition 1.9]{I2}).
\end{definition}

It is easy to see that this is a left-right symmetric notion. Moreover, by the results of \cite{HR} (see \cite[3.7.5 Theorem]{HR}), the coradical of $C_0$ is coreflexive for any coalgebra over an infinite field. Thus, coreflexivity and direct coreflexivity are very close (almost equivalent) notions, modulo some set theory considerations, which are not of consequence in any usual example.

Among the general results of this paper, we also prove three interesting and perhaps unexpected facts: direct coreflexivity, coreflexivity and the closure under extensions for rational $C^*$-modules (inside ${}_{C^*}\Mm$) are Morita invariant notions, in the sense that if $D$ is a coalgebra whose category of comodules is equivalent to that of $C$, then these properties are preserved from $C$ to $D$. This is perhaps a bit surprising, since the definitions of these notions do not appear to be categorical at first. We also show that these properties are preserved by subcoalgebras, and these two methods allow us to extend the results on quiver and monomial coalgebras to general coalgebras, via the Gabriel (Ext) quiver of the coalgebra. 



We refer the reader to \cite{DNR} and the recent monograph \cite{RAD} for basic theory of coalgebras and their comodules. We fix some notation. Let $V$ be a vector space. For $X$ a subspace of $V$, we denote $X^\perp_{V^*}=\{f\in V^*| f\vert_X =0\}$, and for $Y\subseteq V^*$, we denote $Y^\perp_{V}=\{v\in V| f(v)=0,\forall f\in Y\}$. We write simply $X^\perp$ and $Y^\perp$ when there is no danger of confusion. We consider the finite topology on $V^*$, which is a linear topology with a basis of neighborhoods for $0$ consisting of subspaces of the form $W^\perp$, with $W$ a finite dimensional subspace of $V$. We recall that for a subspace $X$ of $V$, we have $(X^\perp)^\perp=X$, and for a subspace $Y$ of $V^*$, one has that the closure of $Y$ is $\overline{Y}=(Y^\perp)^\perp$.

\section{The Thick Arrow Quiver}

In this section, we aim to answer one of the above questions connected to the problem of closure under extensions of rational $C^*$-modules. In \cite{I2}, given the characterization of this property via a topological condition and a homological condition, the question if this second homological condition is perhaps superfluous and is always true was raised. This is motivated by the fact that all the known examples of coalgebras which are (directly) coreflexive in fact have a torsion Rational functor. Specifically, \cite[Question 3]{I2} asks whether for every simple left $C$-comodule $S$ and every injective indecomposable left $C$-comodule $E$, $\Ext^1_{C^*}(S,E)=0$. We note that this is true in many situations; for example, \cite[Theorem 4.8]{I2} shows that if a very simple condition on the $\Ext$ quiver of $C$ is satisfied, then the answer to this question is true. Namely, if for every simple left $C$-comodule $S$, the second term $L_1(E(S))$ of the coradical filtration of the injective hull $E(S)$ of $S$ is finite dimensional ($\dim(L_1(E(S)))<\infty$), then $\Ext^1_{C^*}(S,E)=0$ for every simple left comodule $S$ and every indecomposable left $C$-comodule $E$. In particular, this happens if the injective hulls of simple right comodules are artinian. A similar  result was already known before by \cite[Theorem 3.2]{NT}, which stated that a coalgebra $C$ is artinian as a left $C^*$-module if and only if it is injective as a right $C^*$-module. We show that the answer to this question is negative in general, by looking at the path coalgebra of the ``thick arrow quiver'', the quiver having two vertices $a,b$ and infinitely many arrows $x_n$ between $a$ and $b$:
$$\xymatrix{
a \ar@/^2ex/[rr] \ar@/^1ex/[rr]_\dots \ar@/^-1ex/[rr]_{\dots} & & b
}$$ 
Let $C$ be the quiver coalgebra of this quiver over a field $\KK$. Then $C$ has as basis the set $\{a,b\}\cup\{x_n|n\geq 1\}$, where $x_n$ represent the arrows. We prefer to work with left modules, and right comodules. Let $S=\KK a$; this is a coalgebra and a left and right simple comodule. Let $E$ be its injective hull as right comodule; it has a basis $\{a\}\cup\{x_n|n\geq 1\}$. Let $T=\KK b$; $T$ is a simple and injective right $C$-comodule, and a simple left $C$-comodule. We show that $\Ext^1_{C^*}(T,E)\neq 0$. In fact, we are able to precisely compute this $\Ext$ space. Let $P$ be the projective cover of $T$ as a left $C^*$-module; it is the dual of the injective hull $E_l(T)$ of $T$ as a left comodule (e.g. by \cite[Lemma 1.4]{I}). Note that $E_l(T)$ has basis $\{b;x_n, n\geq 1\}$, and so we have an exact sequence of left comodules
$$0\longrightarrow T\longrightarrow E_l(T) \longrightarrow S^{(\NN)}\longrightarrow 0$$
(we use $S^{(\NN)}$ for the coproduct power) which yields the exact sequence of left $C^*$-modules
$$0\longrightarrow S^{\NN}\longrightarrow P=E_l(T)^*\longrightarrow T\longrightarrow 0$$
Applying the long exact sequence of homology we obtain the exact sequence, where $\Hom=\Hom_{C^*}$, $\Ext^1=\Ext^1_{C^*}$
$$0\rightarrow\Hom(T,S)\rightarrow \Hom(P,S)\rightarrow \Hom(S^{\NN},S)\rightarrow \Ext^1(T,S)\rightarrow \Ext^1(P,S)\rightarrow...$$
Since $\Ext^1(P,S)=0$, $\Hom(P,S)=0$ ($S\not\cong T$), we obtain $\Ext^1(T,S)\cong \Hom(S^\NN,S)$. One easily notes that $\Hom(S^\NN,S)=\Hom_\KK(\KK^\NN,\KK)=(\KK^\NN)^*$. But, it is well known (see for example \cite[Chapter IX, Theorem 2]{J}) that for a (left) vector space $V$ over a skewfield $D$, the dual vector space $V^*$ has dimension $\dim_D(V^*)=|D|^{\dim(V)}$, where $|D|$ is the cardinality of $D$. Hence, $\dim(\KK^\NN)^*=|\KK|^{\dim(\KK^\NN)}=|\KK|^{|\KK|^\NN}$ since $\KK^\NN=(\KK^{(\NN)})^*$. Therefore, $\dim(\Ext^1(T,S))=|\KK|^{|\KK|^\NN}$\\
Now, consider the short exact sequence of right comodules and left $C^*$-modules
$$0\longrightarrow S\longrightarrow E\longrightarrow T^{(\NN)}\longrightarrow 0$$
Applying the long exact sequence in homology again (as $C^*$-modules) we get
$$0\rightarrow \Hom(T,S)\rightarrow \Hom(T,E)\rightarrow \Hom(T,T^{(\NN)})\rightarrow \Ext^1(T,S)\rightarrow \Ext^1(T,E)\rightarrow \Ext^1(T,T^{(\NN)})\dots$$
Now, since $T^\NN$ is semisimple of length (dimension) at least $\NN$, we have that $T^{(\NN)}$ embeds in $T^\NN$. Note that obviously $T^\NN$ is semisimple: since it is annihilated by $(\KK b)^\perp$, its properties can established when regarded as a module over the algebra $(\KK b)^*\cong \KK$. Thus $T^{(\NN)}$ is a direct summand in $T^\NN$. Also, $T$ is an injective right comodule, and since it is finite dimensional, it is injective also as a left $C^*$-module (see \cite[Section 2.4]{DNR}). Therefore, $T^\NN$ is an injective left $C^*$-module, and so $T^{(\NN)}$ is an injective left $C^*$-module too. This shows that $\Ext^1(T,T^{(\NN)})=0$, and since $\Hom(T,E)=0$, the above exact sequence yields that the following sequence is exact
$$0\longrightarrow \Hom(T,T^{(\NN)})\longrightarrow \Ext^1(T,S)\longrightarrow \Ext^1(T,E)\longrightarrow 0$$
Regarded as vector spaces, since obviously $\Hom(T,T^{(\NN)})=\Hom(T,T)^{(\NN)}$, we get 
$$\dim(\Ext^1(T,S))=\dim(\Ext^1(T,E))+|\NN|=\max\{\dim(\Ext^1(T,E)),\aleph_0\}$$
But $\dim(\Ext^1(T,S))=|\KK|^{|\KK|^\NN}>2^{2^\NN}>|\NN|$, and so we get
$$\dim(\Ext^1(T,S))=\dim(\Ext^1(T,E))=|\KK|^{|\KK|^\NN}$$ 
This is obviously bigger than $2^{2^{\aleph_0}}$, and so there is a set of extensions larger than that of the continuum (and also an equally large set of isomorphism of modules of length 2 which are extensions of $T$ by $S$). In particular $\Ext^1(T,E)\neq 0$. 

The coalgebra $C$ of the thick arrow quiver is not locally finite, i.e. $\Ext^1_C(T,S)$ is infinite dimensional (here $\Ext^1_C$ means extensions in the category of comodules). Thus, one question still remains, which one may be even tempted to state as a conjecture:


\begin{openquestion}\label{q1}
If $C$ is a locally finite coalgebra, $S$ is a simple left $C$-comodule and $E$ is an indecomposable injective left $C$-comodule, does it follow that $\Ext^1_{C^*}(S,E)=0$?
\end{openquestion}


Note that by \cite[Proposition 1.3, Proposition 2.1]{I2} and their proofs, a directly coreflexive coalgebra is locally finite.  Hence, in particular, another more particular form of this question from \cite{I2} remains as well:


\begin{openquestion}\label{q2}
If $C$ is a directly coreflexive coalgebra, $S$ is a simple left $C$-comodule and $E$ is an indecomposable injective left $C$-comodule, does it follow that $\Ext^1_{C^*}(S,E)=0$? Equivalently, does every directly coreflexive coalgebra have a torsion rational functor? 
\end{openquestion}

\section{Morita Invariance and Categorical Properties}

For an arbitrary algebra $A$ and a left $A$-module $M$, denote by $Lf(M)$ the largest locally finite submodule of $M$, equivalently, the sum of all finite dimensional submodules of $M$. This is a pre-torsion functor on the category of left $A$-modules. Given a vector space $M$, we view $M$ as a subspace of $(M^*)^*$ via the canonical embedding given by evaluation, i.e. $\iota_M:M\hookrightarrow (M^*)^*$ has $\iota_M(f)(m)=f(m)$. The following proposition essentially states that a coalgebra is coreflexive if and only if its finitely cogenerated comodules satisfy a coreflexivity property, in the sense that they are isomorphic to a certain double dual.

\begin{proposition}
A coalgebra $C$ is coreflexive if and only if $Lf((M^*)^*)=M$ for every right finitely cogenerated $C$-comodule $M$. Moreover, when $C$ is coreflexive, then $Rat((M^*)^*)=M$ for every such $M$.
\end{proposition}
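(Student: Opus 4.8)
The plan is to translate the statement into the language of the finite topology and then reduce to indecomposable injectives. First I would fix the module structures: a right comodule $M$ is a rational left $C^*$-module, its dual $M^*$ is then a right $C^*$-module, and $(M^*)^*$ is again a left $C^*$-module containing $\iota_M(M)$. Since $M$ is rational every element lies in a finite dimensional subcomodule, so $\iota_M(M)\subseteq Rat((M^*)^*)$; and as every rational module is locally finite (fundamental theorem of comodules), one always has the chain $M\subseteq Rat((M^*)^*)\subseteq Lf((M^*)^*)$. The key reformulation I would establish is that $Lf((M^*)^*)=M$ if and only if every finite codimensional $C^*$-submodule $U$ of $M^*$ is \emph{closed} in the finite topology. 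Indeed, a finite dimensional submodule $F\subseteq (M^*)^*$ corresponds to the finite codimensional submodule $U=F^{\perp}\subseteq M^*$, and the identity $U^{\perp}=\iota_M(U^{\perp_M})\subseteq \iota_M(M)$ holds precisely when $U$ is closed. The subtle point, which is the conceptual heart of the bookkeeping, is that $U=F^{\perp_{M^*}}$ is closed in the topology induced by $M$ exactly when the functionals in $F$ are continuous, i.e. when $F\subseteq\iota_M(M)$; merely being finite codimensional is not enough.

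For the forward implication I would use coreflexivity in the form ``every finite dimensional $C^*$-module is rational''. This at once gives $Lf(N)=Rat(N)$ for every $N$ (each finite dimensional submodule is rational), so it suffices to prove the Moreover statement $Rat((M^*)^*)=M$, from which $Lf((M^*)^*)=M$ then follows. Here finite cogeneration enters decisively: writing $M\hookrightarrow \mathcal{E}:=\bigoplus_{i=1}^m E(S_i)$ with the $E(S_i)$ indecomposable injective comodules, the inclusion dualizes to an injection $(M^*)^*\hookrightarrow(\mathcal{E}^*)^*$, and since $Rat$ and the double dual commute with \emph{finite} direct sums I reduce to showing $Rat((E(S)^*)^*)=E(S)$ for a single indecomposable injective. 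Intersecting $Rat((M^*)^*)\subseteq Rat((\mathcal{E}^*)^*)=\mathcal{E}$ with the image of $(M^*)^*$ and using $(M^{\perp})^{\perp}=M$ inside $\mathcal{E}$ then pins $Rat((M^*)^*)$ down to $\iota_M(M)$. I expect the single-injective computation to rest on the fact, already used in Section 2, that the dual of an injective comodule is a projective $C^*$-module, so that $(E(S)^*)^*$ is the dual of a projective whose rational part is controlled by the coradical data of $E(S)$.

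For the converse I would argue by contraposition: if $C$ is not coreflexive then some cofinite right ideal $J\subseteq C^*$ is not closed, equivalently the finite dimensional module $C^*/J$ is not rational. The task is to \emph{localize} this failure to a finitely cogenerated comodule, i.e. to produce a finitely cogenerated right comodule $M$ together with a surjection of right $C^*$-modules $M^*\twoheadrightarrow C^*/J$; its kernel is then a non-closed finite codimensional submodule of $M^*$, whence $Lf((M^*)^*)\supsetneq M$. Via the duality $(-)^*$ this amounts to a left-module embedding $(C^*/J)^*\hookrightarrow(M^*)^*$, where $(C^*/J)^*=J^{\perp}\subseteq(C^*)^*$ is a finite dimensional submodule meeting $\iota_C(C)$ only in $\iota_C(J^{\perp_C})$, hence genuinely poking out of $\iota_C(C)$ precisely because $J$ is not closed. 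The candidate for $M$ is a finite-socle quotient $C/K$ of $C$, with $K$ a subcomodule contained in a suitable finite codimensional subspace of $C$ realizing the chosen coset representatives of $C^*/J$.

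The main obstacle is exactly this reverse construction: one must choose the subcomodule $K$ so that simultaneously $C/K$ is finitely cogenerated (finite dimensional socle) \emph{and} the restriction $(C/K)^*=K^{\perp}\hookrightarrow C^*$ still surjects onto $C^*/J$, i.e. $K^{\perp}+J=C^*$. Guaranteeing a subcomodule $K$ inside a prescribed finite codimensional subspace for which $C/K$ has finite dimensional socle is the delicate step, and I would handle it using the decomposition of the injective cogenerator $C=\bigoplus_S E(S)^{(\mu_S)}$ to confine the non-closedness to the finitely many indecomposable injective components on which the finite dimensional witness $J^{\perp_C}$ is supported, thereby reducing the converse, as in the forward direction, to indecomposable injectives.
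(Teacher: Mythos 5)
Your reformulation of $Lf((M^*)^*)=M$ in terms of closedness of the finite-codimensional $C^*$-submodules of $M^*$ is correct, and your reduction of the forward direction to indecomposable injectives (via naturality of $\iota$ and $(M^{\perp})^{\perp}=M$ inside the injective cogenerator) is sound. But the forward direction is not finished: the single claim $Rat((E(S)^*)^*)=E(S)$ is where all the content lies, and you leave it as an expectation; projectivity of $E(S)^*$ does not by itself say anything about $Rat$ of its dual. The clean way to close this hole is the identity $Rat((C^*)^*)=\iota_C(C)$, valid over \emph{any} coalgebra (evaluate a rational structure equation $c^*\cdot\Phi=\sum_i c^*(c_i)\Phi_i$ at $\varepsilon$ to get $\Phi=\iota_C\left(\sum_i\Phi_i(\varepsilon)c_i\right)$), followed by your own intersection argument; the paper instead argues directly: for a finite-dimensional submodule $U=C^*\alpha\subseteq (M^*)^*$ with $M\subseteq C^n$, coreflexivity makes the finite-dimensional module $M^*/U^{\perp}$ rational, its coefficient coalgebra $W$ yields $(W^n\cap M)^{\perp}\subseteq U^{\perp}$, so $U^{\perp}$ is open, hence equal to $X^{\perp}$ for a finite-dimensional subcomodule $X\subseteq M$, and a dimension count gives $\iota_M(X)=U$.

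The more serious gap is in the converse, and it stems from your reading of ``finitely cogenerated.'' You take it to mean ``embeds in a finite direct sum of indecomposable injectives'' (equivalently, finite-dimensional socle), whereas the paper, following the convention of \cite{DNR}, means ``embeds in $C^n$ for some finite $n$,'' so that $C$ itself is finitely cogenerated. Under the paper's reading the converse is a two-line argument, and it is exactly what the paper does: apply the hypothesis to $M=C$; for a cofinite right ideal $I$ of $C^*$, the subspace $I^{\perp}\subseteq (C^*)^*$ is a finite-dimensional left $C^*$-submodule, hence contained in $Lf((C^*)^*)=\iota_C(C)$, say $I^{\perp}=\iota_C(X)$ with $X\subseteq C$ finite dimensional; then $I=(I^{\perp})^{\perp}=X^{\perp}$ is closed. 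Under your reading, the statement you are trying to prove is actually \emph{false}: if $C$ is cosemisimple and not coreflexive (e.g.\ $C=\KK^{(\NN)}$ over a finite field, where non-principal ultrafilters on $\NN$ give non-closed cofinite maximal ideals of $C^*=\KK^{\NN}$), then every comodule with finite-dimensional socle is itself finite dimensional, so $Lf((M^*)^*)=(M^*)^*=M$ holds vacuously for all such $M$, while $C$ is not coreflexive. This is why your ``delicate step''---confining the non-closedness of $J$ to finitely many indecomposable injective components---resists: it cannot be carried out in general, and no choice of $K$ will fix it.
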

\begin{proof}
Assume first $C$ is coreflexive. Let $M\subseteq C^n$, and let $p:(C^*)^n\rightarrow M^*$ be the canonical dual map. Take $\alpha\in Lf((M^*)^*)$; then $U=C^*\cdot \alpha$ is finite dimensional. Therefore, $Y=U^\perp_{M^*}=\{m^*\in M^* \mid\, \beta(m^*)=0,\,\forall \beta\in U\}$ has finite codimension in $M^*$. Since $C$ is coreflexive, $M^*/Y$ is rational. We claim that $Y$ is closed in $M^*$, i.e. $Y=X^\perp_{M^*}$ for some $X\subseteq M$. The argument is similar to that of page 4 of \cite{I2}, for example. Let $W$ be the coalgebra of coefficients of $M^*/Y$; then $W^\perp_{C^*}(M^*/Y)=0$. This implies that $(W^n\cap M)^\perp_{M^*} \subseteq Y$. Indeed, note that $W^\perp_{C^*}$ is an ideal of $C^*$, that $W^\perp_{C^*}\cdot(C^*)^n=(W^n)^\perp_{(C^*)^n}$, and that $p(W^\perp_{C^*} \cdot (C^*)^n)=W^\perp_{C^*} M^*\subseteq Y$. But it is easy to see using definitions that $p\left((W^n)^\perp_{(C^*)^n}\right)=(W^n\cap M)^\perp_{M^*}$, and so we get that $(W^n\cap M)^\perp_{M^*}\subseteq Y$. Moreover, $W^n\cap M$ is finite dimensional, so $(W^n\cap M)^\perp_{M^*}$ is cofinite open. This shows that $Y$ is open, and that $Y=X^\perp_{M^*}$, for some subcomodule $X$ of $M$, by properties of the finite topology. \\
Now, we have that $\dim(X)=\dim(X^*)=\dim(M^*/X^\perp_{M^*})=\dim(M^*/Y)=\dim(M^*/U^\perp_{M^*})=\dim(U)$. By construction, it is easy to see that the canonical map $\iota_M:M\hookrightarrow (M^*)^*$ takes $X$ into $U$, and since they have the same dimension, $\iota_M(X)=U$. This shows that $U\subseteq M$ (modulo canonical identification). \\
Conversely, assume the condition holds for every right comodule $M$. Let $I$ be a right cofinite ideal of $C^*$, and let $U=I^\perp_{(C^*)^*}\subseteq (C^*)^*$. Then $U$ is a finite dimensional submodule of $(C^*)^*$, i.e. $U\subseteq Lf(C^*)^*$. Applying the hypothesis for $M=C$, one sees that there is a finite dimensional $X\subset C$ such that $\iota_C(X)=U$. Now, it is a direct application of the definition that $X^\perp_{C^*}=\iota(X)^\perp_{C^*}(=U^\perp_{C^*})$. Now, using the preliminary remarks, note that $\left(I^\perp_{(C^*)^*} \right)^\perp_{C^*}=I$, and therefore, it follows that $X^\perp_{C^*}=U^\perp_{C^*}=I$. This shows that $I$ is closed in the finite topology of $C^*$, and so $C$ is coreflexive by one of the equivalent definitions of coreflexivity. \\
The last statement is obvious, since locally finite modules over $C^*$ are rational, again by another  equivalent definition of coreflexivity.
\end{proof}

We will use the theory of basic coalgebras \cite{CM}. Recall that a coalgebra $C$ is basic if every simple subcoalgebra is simple as a $C$-comodule (equivalently, simple $C$ comodules have multiplicity one in $C$). For every coalgebra $C$, let $\Ss$ be a set of representatives for the simple left $C$-comodules, and $C=\bigoplus\limits_{S\in \Ss}S^{n(S)}$. Recall from \cite{Rad2} that for an idempotent $e$ of $C^*$, the subspace $eCe$ of $C^*$ has a coalgebra structure given by $ece\longmapsto ec_1e\otimes ec_2e$ and counit equal to the restriction of the counit $\varepsilon$ of $C$ to $eCe$. Moreover, we have that the dual ring is $(eCe)^*\cong eC^*e$. If $e$ is the idempotent of $C^*$ which equals $\varepsilon$ exactly on one of the summands isomorphic to $E(S)$ for each $S$, then the coalgebra $eCe$ is basic, and this is the basic coalgebra of $C$. We call such an $e$ a basic idempotent. This coalgebra can also be understood as obtained from a (categorical) cohom coalgebra construction. The coalgebra $C$ and its basic coalgebra $eCe$ are Morita-Takeuchi equivalent, i.e. $\Mm^C$ and $\Mm^{eCe}$ are equivalent categories. The equivalence is given by the functor $\Mm^C \ni M\longmapsto eM\in \Mm^{eCe}$. This functor is in fact defined on ${}_{C^*}\Mm$ with values in ${}_{eC^*e}\Mm$, but it may not be an equivalence between the full categories of modules. We note that two basic coalgebras are Morita equivalent if and only if they are isomorphic. Moreover, two coalgebras are Morita equivalent if and only if they have isomorphic basic coalgebras. We will need an extension of this equivalence of categories of comodules over $C$ and its basic coalgebra.

\begin{definition}
Let $C$ be a coalgebra. We denote by $\Ll(C)$ the localizing subcategory of ${}_{C^*}\Mm$ generated by the simple $C$-comodules, that is, the subcategory of left $C^*$-modules which is closed under coproducts, quotients, subobjects, and extensions and contains the simple rational $C^*$-modules.
\end{definition}

Obviously, $\Mm^C$ is a full subcategory of $\Ll(C)$, and $\Ll(C)$ consists of the semiartinian $C^*$-modules all of whose simple subquotients are rational. Let us denote $H=e\cdot ()=\Hom_{eC^*e}(C^*e,-):{}_{C^*}\Mm\rightarrow {}_{eC^*e}\Mm$, so $H(M)=eM$, and let $L=C^*e\otimes_{eC^*e}(-)$ be its left adjoint. Then $H\circ L=\rm{Id}$ - the identity functor on $eC^*e$-modules. For a semiartinian module $M$, we denote by $lw(M)$ the Loewy length of $M$.

\begin{lemma}\label{l.eq}
The functor $H$ restricts to an equivalence $H:\Ll(C^*)\rightarrow \Ll(eC^*e)$, where $e$ is a basic idempotent.
\end{lemma}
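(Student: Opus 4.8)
The plan is to realize $H$ and its left adjoint $L$ as mutually inverse equivalences between $\Ll(C^*)$ and $\Ll(eC^*e)$ by checking that the unit and counit of the adjunction $L\dashv H$ become isomorphisms on these subcategories. First I would record the formal features of the pair: $H=e(-)=\Hom_{C^*}(C^*e,-)$ is exact (as $C^*e$ is a direct summand of ${}_{C^*}C^*$, hence projective) and preserves coproducts, while $L=C^*e\otimes_{eC^*e}(-)$ is right exact and coproduct preserving. Since $H\circ L=\mathrm{Id}$, the unit $\eta\colon\mathrm{Id}\to HL$ is an isomorphism; consequently $L$ is fully faithful and, by the triangle identities, $H\varepsilon_M$ is an isomorphism for every $M$, where $\varepsilon\colon LH\to\mathrm{Id}$ is the counit. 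Thus the statement reduces to showing that $\varepsilon_M$ is an isomorphism for $M\in\Ll(C^*)$, together with the two containments $H(\Ll(C^*))\subseteq\Ll(eC^*e)$ and $L(\Ll(eC^*e))\subseteq\Ll(C^*)$.

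The key soft input is that $H$ annihilates no nonzero object of $\Ll(C^*)$. Indeed, restricted to comodules $H$ is the Morita--Takeuchi equivalence $\Mm^C\simeq\Mm^{eCe}$, so it carries simple rational $C^*$-modules to simple rational $eC^*e$-modules; and because $e$ is a basic idempotent, $eS\neq 0$ for every simple comodule $S$. Hence if $X\in\Ll(C^*)$ is nonzero, its socle contains a simple comodule $S$ and $0\neq eS\subseteq eX=HX$. Combined with exactness and coproduct preservation, this also gives $H(\Ll(C^*))\subseteq\Ll(eC^*e)$, since $H$ carries the Loewy filtration of a semiartinian module with rational factors to a filtration with semisimple rational factors. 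I would then deduce that $\varepsilon_M$ is epi for $M\in\Ll(C^*)$: its cokernel is a quotient of $M$, hence lies in $\Ll(C^*)$ and is killed by $H$ (as $H\varepsilon_M$ is iso), so it vanishes. At this point full faithfulness of $H$ on $\Ll(C^*)$ already follows without further work, since for $M,M'\in\Ll(C^*)$ any $\tilde g\colon LHM\to M'$ sends $\ker\varepsilon_M$ into an object of $\Ll(C^*)$ killed by $H$, hence to $0$; as $\varepsilon_M$ is epi, $\tilde g$ factors uniquely through it, so the canonical map $\Hom_{C^*}(M,M')\to\Hom_{eC^*e}(HM,HM')$ is a bijection.

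What remains is the monomorphism half of $\varepsilon_M$ and essential surjectivity, and both follow once I prove the containment $L(\Ll(eC^*e))\subseteq\Ll(C^*)$: then $\ker\varepsilon_M\subseteq LHM\in\Ll(C^*)$ is killed by $H$, hence zero, giving $\varepsilon_M$ iso, and $M=LN$ realizes any $N\in\Ll(eC^*e)$ up to $H$. This containment is the main obstacle, the difficulty being that $L$ is only right exact, so a priori it need not carry the Loewy filtration of $N$ to a filtration of $LN$. I plan to remove this by showing $L$ is in fact exact, i.e. that $C^*e$ is flat (indeed projective) as a right $eC^*e$-module; this is a structural consequence of $e$ being a basic idempotent and of the Morita--Takeuchi equivalence, and can be checked on the decomposition of $C^*e$ as an $eC^*e$-module.

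Granting exactness, I would identify $L$ on simples: for a simple $eCe$-comodule $eS$ the counit $\varepsilon_S\colon L(eS)\to S$ is epi with $HL(eS)=eS$ simple, and exactness forces $L(eS)\cong S$, the corresponding simple $C$-comodule. Since $L$ is exact and coproduct preserving, it then carries the Loewy filtration of an arbitrary $N\in\Ll(eC^*e)$ to an exhaustive filtration of $LN$ with semisimple rational layers, so $LN$ is semiartinian with rational simple subquotients, i.e. $LN\in\Ll(C^*)$. With both unit and counit isomorphisms on the two subcategories, $H$ and $L$ restrict to mutually inverse equivalences $\Ll(C^*)\simeq\Ll(eC^*e)$, as required.
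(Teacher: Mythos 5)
Your reduction and the first two\--thirds of the argument are correct, and in places more careful than the paper's own proof (the paper dismisses fullness with ``Obviously, $H$ is full since $H\circ L={\rm Id}$'', whereas your factorization of any $\tilde g\colon LHM\to M'$ through $\varepsilon_M$ is an actual proof). The gap is exactly at the point you yourself call the main obstacle. You reduce the lemma to the containment $L(\Ll(eC^*e))\subseteq\Ll(C^*)$ and propose to obtain it by showing $L$ is exact, i.e.\ that $C^*e$ is flat (indeed projective) as a right $eC^*e$-module, asserting this is ``a structural consequence of $e$ being a basic idempotent and of the Morita--Takeuchi equivalence'' which ``can be checked on the decomposition''. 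No argument is given, and none of the facts you cite imply it. The finite-dimensional Morita argument (for $A$ finite dimensional and $e$ basic, $Ae$ is a progenerator, hence automatically projective over $eAe$) does not transfer: here $C^*e$ is not finitely generated and $C^*eC^*$ is only dense in $C^*$, not equal to it --- which is precisely why, as the paper notes, $H$ need not be an equivalence of the full module categories. Concretely, dualizing $C\cong\bigoplus_S E(S)^{n(S)}$ gives $C^*e\cong\prod_S\bigl(E^{eCe}(S)^*\bigr)^{n(S)}$ as right $eC^*e$-modules: an infinite product of finitely generated projectives. If the multiplicities $n(S)$ are bounded, this is a direct summand of some $(eC^*e)^k$ and your claim holds; but infinite products of projective (or flat) modules are flat only under coherence-type hypotheses on the ring (Chase's theorem), and duals of coalgebras need not satisfy them. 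One can in fact arrange failure: take $D_n$ to be the coalgebra $\QQ[x,y,z]$ with $x,y,z$ primitive, so $D_n^*\cong\QQ[[x,y,z]]$, let $D=\bigoplus_n D_n$ and $C=\bigoplus_n\bigl(D_n\otimes M_{m_n}(\QQ)^*\bigr)$ with finite $m_n\to\infty$; then $eC^*e\cong\prod_n\QQ[[x,y,z]]$ and $C^*e\cong\prod_n\QQ[[x,y,z]]^{m_n}$, and the failure of uniform coherence of the three-dimensional regular local ring $\QQ[[x,y,z]]$ (Soublin) produces a finitely generated left ideal $I=\prod_n I_n$ with ${\rm Tor}_1^{eC^*e}(C^*e,\,eC^*e/I)\neq 0$. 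So exactness of $L$ is simply not available in general, and your route to the key containment (and to the identification $L(eS)\cong S$, which also invokes exactness) breaks down.

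The good news is that the lemma does not need exactness of $L$, and this is how the paper proceeds. Since $\Ll(C^*)$ is by definition closed under quotients and extensions, right exactness suffices: argue by transfinite induction on the Loewy length of $N\in\Ll(eC^*e)$. For $N$ semisimple rational, $L(N)$ is a direct sum of modules $C^*e\otimes_{eC^*e}eS\cong S$, verified directly rather than deduced from exactness. At a successor step, the exact sequence $L(N_\beta)\to L(N_{\beta+1})\to L(N_{\beta+1}/N_\beta)\to 0$ exhibits $L(N_{\beta+1})$ as an extension of the semisimple rational module $L(N_{\beta+1}/N_\beta)$ by a quotient of $L(N_\beta)\in\Ll(C^*)$, hence $L(N_{\beta+1})\in\Ll(C^*)$; at a limit step one uses that $L$ preserves coproducts and epimorphisms. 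With this containment established, your own unit/counit mechanism --- $\varepsilon_M$ epi because its cokernel lies in $\Ll(C^*)$ and is killed by $H$, then mono because $\Ker\varepsilon_M\subseteq LHM\in\Ll(C^*)$ is killed by $H$, then essential surjectivity via $M=LN$ --- completes the proof exactly as you outlined.
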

\begin{proof}
Let us fist note that $H$ is faithful. For this, we show first that $H(M)\neq 0$ for $M\neq 0$. Indeed, if  $M\neq 0$, there is a simple rational submodule $S$ of $M$, and since $eS\neq 0$, we have $eS\subset H(M)=eM\neq 0$. Therefore, if $0\neq f:M\rightarrow N$, then $H({\rm Im}(f))\neq 0$, so the image of $H(f)$ contains $H({\rm Im}(f))=e{\rm Im}(f)$, and is nonzero. Obviously, $H$ is full since $H\circ L={\rm Id}$.\\
Now, note that $L(\Ll(eC^*e))\subset \Ll(C^*)$. To see this, we proceed by transfinite induction on the Loewy length of $M$, and show that moreover $lw(L(M))\leq lw(M)$. For a semisimple rational $eC^*e$-module $M$, obviously $L(M)$ is semisimple rational, since it is standard to see that for a simple rational $eC^*e$-module $S$, $C^*e\otimes_{eC^*e}S$ is simple rational. Assume the statement holds for $\alpha$. If $\alpha=\beta+1$ is a successor, then consider the exact sequence $0\rightarrow M_\beta\rightarrow M_\alpha\rightarrow M_\alpha/M_\beta\rightarrow 0$. This yields the exact sequence $L(M_\beta)\rightarrow L(M_\alpha)\rightarrow L(M_\alpha/M_\beta)\rightarrow 0$, and from here we easily conclude that $L(M_\alpha)$ is semiartinian with rational simple subquotients (i.e. $L(M_\alpha)$ belongs to $\Ll(C^*)$) and $L(M_\alpha)$ has Loewy length less than $\beta+1$. If $\alpha$ is a limit ordinal, let $M=\bigcup\limits_{\beta<\alpha}M_\beta$, with $lw(M_\beta)=\beta$, then consider the epimorphism $\bigoplus\limits_{\beta}M_\beta\rightarrow M\rightarrow 0$. This yields an epimorphism $\bigoplus\limits_{\beta<\alpha}L(M_\beta)\rightarrow L(M)\rightarrow 0$ (since $L$ commutes with $\bigoplus$ and is right exact), so $L(M)$ is in $\Ll(C^*)$ and $lw(L(M))\leq \lim\limits_{\beta<\alpha}\beta = \alpha$. Similarly, using the fact that $H$ is (right) exact and also commutes with direct sums, one can show that $H(\Ll(C^*))\subset \Ll(eC^*e)$. \\
Now, $H$ is surjective on objects, since for $N$ in $\Ll(eC^*e)$, the object $M=L(N)$ satisfies $H(M)=N$. Therefore, $H$ induces a full and faithful functor from $\Ll(C^*)$ to $\Ll(eC^*e)$ which is ``surjective" on objects, so it is an equivalence. 
\end{proof}

\begin{proposition}
Let $C$ be a coalgebra. \\
(i) The finite dimensional rational modules are closed under extensions for $C$ if and only if the same holds for $eCe$.\\
(ii) $C$ is coreflexive if and only if $eCe$ is coreflexive.\\
(iii) $C$ has a torsion left Rat-functor if and only if its basic coalgebra $eCe$ has a torsion left Rat-functor.
\end{proposition}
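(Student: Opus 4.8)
The plan is to treat parts (i) and (iii) together by means of the equivalence of Lemma \ref{l.eq}, and to reduce part (ii) to the cosemisimple case using the known decomposition of coreflexivity. The main point for (i) and (iii) is that both properties are really internal to the category $\Ll(C^*)$: if $0\to M'\to M\to M''\to 0$ is an extension in ${}_{C^*}\Mm$ with $M',M''$ rational, then $M$ is semiartinian with all simple subquotients rational, hence $M\in\Ll(C^*)$. Consequently, closure of $\Mm^C$ (respectively of the finite dimensional rational modules) under extensions in ${}_{C^*}\Mm$ is equivalent to closure under extensions computed inside the abelian category $\Ll(C^*)$.

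For (iii), I would then observe that the equivalence $H:\Ll(C^*)\to\Ll(eC^*e)$ restricts, on rational objects, to the Morita--Takeuchi equivalence $\Mm^C\to\Mm^{eCe}$, $M\mapsto eM$. In particular $H$ carries $\Mm^C$ onto $\Mm^{eCe}$, and, being an equivalence of abelian categories, it reflects rationality: an object $M\in\Ll(C^*)$ lies in $\Mm^C$ if and only if $H(M)=eM$ lies in $\Mm^{eCe}$ (since $H|_{\Mm^C}$ is essentially surjective onto $\Mm^{eCe}$ and an equivalence reflects isomorphisms). As an equivalence of abelian categories preserves and reflects short exact sequences, closure of $\Mm^C$ under extensions inside $\Ll(C^*)$ is equivalent to closure of $\Mm^{eCe}$ under extensions inside $\Ll(eC^*e)$, which is exactly (iii). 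For (i) one adds only that $H$, being an equivalence, preserves length, and that a rational module is finite dimensional precisely when it has finite length (every simple comodule being finite dimensional); hence $H$ matches finite dimensional rationals with finite dimensional rationals, and the identical argument yields (i).

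For (ii), I would use the characterization \cite[Proposition 2.1 \& Corollary 2.2]{I2} that $C$ is coreflexive if and only if $C_0$ is coreflexive and $C$ is directly coreflexive, i.e. the finite dimensional rational modules are closed under extensions. By part (i), direct coreflexivity of $C$ is equivalent to that of $eCe$, so it remains to compare coreflexivity of the two coradicals. Here I would invoke the compatibility of the corner construction with the coradical filtration, namely $(eCe)_0=eC_0e$, so that the coradical of the basic coalgebra is the basic coalgebra of $C_0$: under the bijection $S\mapsto eS$ on isomorphism classes of simple comodules, $C_0=\bigoplus_S C_S$ corresponds to $eC_0e=\bigoplus_S\Delta_S$, where $C_S$ is the (finite dimensional) simple subcoalgebra attached to $S$ and $\Delta_S$ its division-coalgebra Morita companion. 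Thus (ii) reduces to Morita invariance of coreflexivity for cosemisimple coalgebras, which I would settle by appealing to the set-theoretic/cardinality characterization of coreflexivity of a direct sum of finite dimensional simple coalgebras: such coreflexivity depends only on the cardinality of the indexing set of simples (equivalently, it holds iff $\KK^{(I)}$ is coreflexive, with $I$ the set of simples), and this datum is transported faithfully by the equivalence since $S\mapsto eS$ is a bijection preserving finite dimensionality.

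The main obstacle is precisely this cosemisimple step in (ii): unlike (i) and (iii), coreflexivity is not visibly a property internal to $\Ll(C^*)$, and over a finite base field $C_0$ need not be coreflexive, so one genuinely needs that coreflexivity of a cosemisimple coalgebra is controlled only by the cardinality of its set of simple subcoalgebras (together with the field). Over an infinite field this difficulty evaporates, since by \cite{HR} the coalgebra $C_0$ is then automatically coreflexive and (ii) follows at once from (i); the extra care is required only to cover all fields uniformly.
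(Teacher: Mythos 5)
Your proposal is correct, and for parts (i) and (ii) it follows essentially the paper's own argument: (i) is exactly the observation that extensions of rational modules land in $\Ll(C^*)$ plus Lemma \ref{l.eq}, and (ii) is the same reduction the paper makes --- coreflexivity equals coreflexivity of $C_0$ plus direct coreflexivity, the coradical of the basic coalgebra is the basic coalgebra of $C_0$, and the cosemisimple case is settled by Radford's characterization (this is the paper's citation of \cite[3.12 Corollary]{Rad}; your description of it as a condition depending only on the indexing set of simples is precisely what makes it visibly Morita invariant). The genuine difference is in (iii). The paper does not transport the closure property directly: it invokes \cite[Theorem 3.7]{I2} to split ``$\Rat$ is a torsion functor'' into direct coreflexivity (handled by (i)) together with the homological condition $\Ext^1_{C^*}(T,E)=0$ for $T$ simple rational and $E$ injective indecomposable, and then shows that this Ext condition is internal to $\Ll(C)$ (the middle term of any extension $0\rightarrow E\rightarrow M\rightarrow T\rightarrow 0$ lies in $\Ll(C)$), so it transports via Lemma \ref{l.eq}. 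You instead push the full closure-under-extensions statement through the equivalence $H$ itself, using that $H$ restricted to $\Mm^C$ is the Morita--Takeuchi equivalence onto $\Mm^{eCe}$, hence $H$ preserves and reflects rationality, and that an equivalence of abelian categories preserves and reflects short exact sequences (here one also uses that $\Ll(C^*)$ is a Serre subcategory, so exactness inside it agrees with exactness in ${}_{C^*}\Mm$). Your route is more self-contained, avoiding the external characterization theorem of \cite{I2} entirely; the paper's route has the side benefit of recording that the Ext-vanishing condition is itself a Morita invariant, which ties into the open questions about that condition discussed elsewhere in the paper. Both arguments rest on the same key input, Lemma \ref{l.eq}.
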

\begin{proof}
(i) We note that closure under extensions in ${}_{C^*}\Mm$ for the finite rational modules over $C^*$ is equivalent to closure under extensions in $\Ll(C^*)$ for finite rational modules. Using Lemma \ref{l.eq}, the statement follows.\\
(ii) This is true since $C$ is coreflexive if and only if $C_0$ is coreflexive and finite rational modules are closed under extensions. Now, for cosemisimple coalgebras, Morita invariance for coreflexivity is easily seen to follow from the characterization of \cite[3.12 Corollary]{Rad}. Hence, using this, since $C_0$ and $(eCe)_0$ are Morita equivalent, (ii) now follows from (i). \\
(iii) To show this, by \cite[Theorem 3.7]{I2} and (i), we only need to prove that the homological condition $\Ext_{C^*}^1(T,E)=0$ is Morita invariant from $C$ to $eCe$, for $T$ simple rational and $E$ an injective comodule. This condition equivalent to the statement that every exact sequence of $C^*$-modules $0\rightarrow E\rightarrow M\rightarrow T\rightarrow 0$ is split. But for any such sequence, $M$ is obviously an object of $\Ll(C)$, so this property depends only on the category $\Ll(C)$. Therefore, by Lemma \ref{l.eq}, that the condition holds over $C$ if and only if it holds over $eCe$. This ends the proof.
\end{proof}

Using this and the fact that two coalgebras are Morita-Takeuchi equivalent if and only if they have the same basic coalgebra, we get the following

\begin{corollary}\label{c.Morita}
The following properties are Morita-Takeuchi invariant for coalgebras $C$:\\
$\bullet$ coreflexivity\\
$\bullet$ direct coreflexivity, i.e. closure under extensions for finite dimensional rational modules inside $C^*$-modules.\\
$\bullet$ existence of a left (right) Rat-functor (equivalently, closure under extensions of $Rat({}_{C^*}\Mm)$ inside ${}_{C^*}\Mm$).
\end{corollary}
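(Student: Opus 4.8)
The plan is to deduce Corollary \ref{c.Morita} directly from the preceding Proposition together with the cited structure theory of basic coalgebras. The key observation to exploit is the fact, stated in the paragraph preceding Lemma \ref{l.eq}, that two coalgebras are Morita--Takeuchi equivalent if and only if they have isomorphic basic coalgebras. Thus the whole corollary reduces to a two-step argument: first, each of the three properties transfers between an arbitrary coalgebra $C$ and its basic coalgebra $eCe$ (this is exactly what parts (i), (ii) and (iii) of the Proposition assert); second, Morita--Takeuchi equivalent coalgebras share the same basic coalgebra.

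Concretely, suppose $C$ and $D$ are Morita--Takeuchi equivalent, so that $\Mm^C$ and $\Mm^D$ are equivalent categories. By the structure theory, their basic coalgebras are isomorphic, say $eCe \cong e'De'$ for basic idempotents $e\in C^*$ and $e'\in D^*$. For each of the three properties $\mathcal{P}$ in the list, I would argue as follows. If $C$ has property $\mathcal{P}$, then by the relevant part of the Proposition (part (ii) for coreflexivity, part (i) for direct coreflexivity, part (iii) for the torsion Rat-functor), the basic coalgebra $eCe$ has property $\mathcal{P}$. Since $eCe \cong e'De'$ as coalgebras, $e'De'$ also has property $\mathcal{P}$, because all three properties are evidently invariant under coalgebra isomorphism (they are phrased purely in terms of $\Mm^{eCe}$, the dual algebra $(eCe)^*$, and its finite topology). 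Finally, applying the same part of the Proposition in the reverse direction to $D$ and its basic coalgebra $e'De'$ yields that $D$ has property $\mathcal{P}$. Running the argument symmetrically gives the converse, establishing the stated invariance.

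The main point requiring care, and the only genuine content beyond bookkeeping, is the passage through the isomorphism $eCe \cong e'De'$: one must be sure that each property is stated in a manner that is manifestly preserved by an isomorphism of coalgebras, and that the relevant part of the Proposition is applied in both directions (from $C$ to $eCe$ and from $D$ to $e'De'$). For coreflexivity and direct coreflexivity this is immediate, since those notions are defined via the comodule category and the closure properties of finite-dimensional rational modules, which transport along a coalgebra isomorphism. For the torsion Rat-functor the same holds, since the characterization used in part (iii) (via \cite[Theorem 3.7]{I2}, the topological condition together with the $\Ext$ vanishing) is again a property of the category $\Ll(eCe)$ and its rational and injective objects. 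I do not anticipate a serious obstacle here; the work has effectively been carried out in the Proposition, and the corollary is its natural categorical repackaging. I would therefore keep the proof brief, simply citing the Proposition and the basic-coalgebra correspondence.
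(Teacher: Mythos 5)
Your proposal is correct and follows exactly the paper's own route: the paper derives the corollary immediately from the Proposition together with the fact that Morita--Takeuchi equivalent coalgebras have isomorphic basic coalgebras. Your explicit spelling-out of the transfer through the isomorphism $eCe \cong e'De'$ (and the invariance of each property under coalgebra isomorphism) is just a more detailed rendering of the same one-line argument.
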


It is known that a subcoalgebra of a coreflexive coalgebra is coreflexive \cite[Proposition 6.4]{T1}. We can now see that closure under extensions of finite dimensional or of arbitrary rational modules is a property also preserved by subcoalgebras.

\begin{proposition}\label{p.sub}
Let $C$ be a coalgebra, and $D$ a subcoalgebra of $C$.\\
(i) If $C$ is directly coreflexive, then so is $D$.\\ 
(ii) If left rational $C^*$-modules are closed under extensions in the category of all $C^*$-modules, equivalently, $C$ has a left Rat torsion functor, then the same holds for $D$.
\end{proposition}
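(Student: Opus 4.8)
The plan is to exploit the surjection of algebras $\pi:C^*\to D^*$, $\pi(f)=f|_D$, whose kernel is the two-sided ideal $I=D^\perp_{C^*}$, so that $D^*\cong C^*/I$. Restriction of scalars along $\pi$ realizes ${}_{D^*}\Mm$ as the full subcategory of ${}_{C^*}\Mm$ consisting of those modules $M$ with $I\cdot M=0$; this embedding is exact, so every short exact sequence of $D^*$-modules is, verbatim, a short exact sequence of $C^*$-modules. First I would record the easy direction: a rational $D^*$-module is just a $D$-comodule, and under the inclusion $D\hookrightarrow C$ every $D$-comodule is a $C$-comodule, hence a rational $C^*$-module annihilated by $I$.

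The crux is the converse identification, which I would isolate as a lemma: a rational $C^*$-module $M$ satisfies $I\cdot M=0$ if and only if $M$ is a $D$-comodule (equivalently, a rational $D^*$-module). For the nontrivial direction, write the comodule structure as $\rho(m)=\sum m_0\otimes m_1$ with the $m_0$ linearly independent; the $C^*$-action is $f\cdot m=\sum m_0 f(m_1)$, so $I\cdot M=0$ forces $f(m_1)=0$ for every $f\in D^\perp$ and every component $m_1$. Hence each $m_1$ lies in $(D^\perp)^\perp_C=D$, giving $\rho(M)\subseteq M\otimes D$, i.e. $M$ is a $D$-comodule. This is exactly where the preliminary fact $(X^\perp)^\perp=X$ for subspaces of $C$ enters.

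With this in place both parts become the same short argument. Given an extension $0\to M'\to M\to M''\to 0$ in ${}_{D^*}\Mm$ with $M',M''$ rational $D^*$-modules, I regard it as an extension of $C^*$-modules whose outer terms are rational over $C^*$. In case (ii) the hypothesis that $C$ has a left Rat torsion functor yields that $M$ is rational over $C^*$; in case (i), since $M',M''$ are finite dimensional so is $M$, and direct coreflexivity of $C$ (closure under extensions for finite-dimensional rational modules) again yields that $M$ is rational over $C^*$. In either case $M$ is a $D^*$-module, so $I\cdot M=0$, and the lemma upgrades ``rational over $C^*$'' to ``rational over $D^*$'', i.e. $M\in\Mm^D$. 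Thus finite-dimensional (resp. arbitrary) rational $D^*$-modules are closed under extensions in ${}_{D^*}\Mm$, proving (i) (resp. (ii)).

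I expect the only genuine point requiring care to be the lemma, namely verifying that annihilation by $D^\perp$ exactly detects the coefficient space of a rational module lying inside $D$; the passage between the two module categories is then a formal consequence of restriction of scalars along the surjection $C^*\to D^*$ being exact and fully faithful onto the $I$-annihilated modules.
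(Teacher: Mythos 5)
Your proposal is correct and follows essentially the same route as the paper: regard the $D^*$-extension as a $C^*$-extension via the restriction surjection $p:C^*\to D^*$, apply the hypothesis on $C$ to get rationality over $C^*$, and then use annihilation by $D^\perp$ together with $(D^\perp)^\perp_C=D$ to conclude $\rho(M)\subseteq M\otimes D$, i.e. rationality over $D^*$. The only difference is cosmetic: you isolate as a lemma (with the linear-independence argument spelled out) what the paper dispatches with ``one easily sees.''
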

\begin{proof}
Let $0\rightarrow M'\rightarrow M\rightarrow M''\rightarrow 0$ be an exact sequence of left $D^*$-modules, with $M'$ and $M''$ rational. Let $p:C^*\rightarrow D^*$ be the canonical surjective morphism (restriction-to-$D$ map). Then, regarded as $C^*$-modules, in each case (finite dimensional modules or not), we obtain that $M'$ and $M''$ are rational $C^*$-modules, and so $M$ is a rational $C^*$-module. Since $M$ is a $D^*$-module, then $M$ is annihilated by $D^\perp$. Let $\rho:M\rightarrow M\otimes C$ be the comodule map corresponding to the left rational $C^*$-module $M$. Since $D^\perp\cdot M=0$, one easily sees that, in fact, $\rho(M)\subset M\otimes D$. Therefore, $M$ is a rational left $D^*$-module: indeed, let $d^*\in D^*$ and $c^*\in C^*$ such that $p(c^*)=d^*$; then $d^*\cdot m= c^*\cdot m=\sum c^*(m_1)m_0=\sum d^*(m_1)m_0$, since $\rho(m)=\sum m_0\otimes m_1\in M\otimes D$.
\end{proof}

\section{Complete path and monomial algebras}

Let $Q$ be a quiver, that is, an oriented graph; loops, cycles and multiple arrows are allowed. For an arrow $a$ in $Q$, we denote $s(a)$ and $t(a)$ its source and target. For a path $p$ in $Q$ we write $p=qr$ if $p$ is the path obtained by following $q$ and then $r$, assuming that $r$ starts where $q$ ends. Write $|p|$ for the length of $p$. Recall that the quiver coalgebra is a coalgebra $\KK Q$ whose basis is given by the set of paths in $Q$, and comultiplication and counit are 
\begin{eqnarray*}
\Delta(p) & = & \sum\limits_{p=qr}q\otimes r \\
\epsilon(p) & = & \delta_{0,|p|}
\end{eqnarray*}
The simple comodules of $\KK Q$ correspond to the vertices of $Q$. For vertices $v,w$ in $Q$, we denote by $a(v,w)$ the cardinality of the set of arrows from $v$ to $w$. This is the dimension of the space of  $\Ext_C^1(\KK w,\KK v)$ of comodule extensions of right comodules $0\rightarrow \KK v \rightarrow M\rightarrow \KK w \rightarrow 0$; this can be infinite. For each vertex $v$, let $R(v)$, (respectively, $L(v)$) be the set of paths starting from (respectively, ending at) $v$. The span of $R(v)$ is the injective hull $E_r(v)$ of the right simple comodule $\KK v$ (\cite{chin}; see also \cite{simson}). Similarly, we use $E_l(v)$ for the injective hull of the left comodule $\KK v$. 
We can apply one of the results of \cite{I2} to obtain a general criterion for relating the coreflexivity property and the Rat torsion property for a quiver coalgebra. 

\begin{theorem}\label{t.1}
Let $Q$ be a quiver. Assume that for each vertex $v$ of $Q$, there is a number $n(v)$ and a set $X(v)$ of paths in $Q$, such that:\\
$\bullet$ (Q1) $X(v)$ consists of paths ending at $v$ (i.e. $X(v)\subseteq L(v)$), of length at most $n(v)$.\\
$\bullet$ (Q2) If $M(v)$ is the set of arrows in $L(v)$ that do not appear at the end of a path in $X(v)$, then the sets  
$S_w=\{x\in M(v) \vert s(x)=w \}$ are finite of cardinality bounded by some number $m(v)$ (finite cardinal).\\
Then the quiver coalgebra $\KK Q$ is directly coreflexive if and only if it has a left torsion Rat functor (i.e. left rational modules are closed under extensions).
\end{theorem}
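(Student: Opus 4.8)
The plan is to prove the nontrivial implication; the reverse implication uses neither (Q1) nor (Q2). Indeed, if $\KK Q$ has a left torsion Rat functor then, by \cite[Theorem 3.7]{I2}, its finite dimensional rational modules are already closed under extensions, which is exactly direct coreflexivity. For the converse I would invoke \cite[Theorem 3.7]{I2} in the other direction: a coalgebra has a left torsion Rat functor if and only if (a) finite dimensional rational modules are closed under extensions, and (b) $\Ext^1_{C^*}(T,E)=0$ for every simple right comodule $T$ and every injective indecomposable right comodule $E$, the $\Ext$ being computed among left $C^*$-modules. Since (a) is precisely the assumed direct coreflexivity, the whole problem reduces to establishing (b). For $C=\KK Q$ the simple right comodules are the $\KK w$ and the injective indecomposables are the $E_r(v)=\mathrm{span}(R(v))$, so it suffices to prove $\Ext^1_{C^*}(\KK w,E_r(v))=0$ for every pair of vertices $v,w$.

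Before computing, I would record the structural consequence that makes the finiteness hypotheses usable: by \cite[Proposition 1.3, Proposition 2.1]{I2} a directly coreflexive coalgebra is locally finite, so each $a(v,w)=\dim \Ext^1_C(\KK w,\KK v)$ is finite; that is, there are only finitely many arrows between any two fixed vertices. This already rules out the pathology of the thick arrow quiver of Section 2, where the nonvanishing of $\Ext^1_{C^*}(T,E)$ was caused precisely by infinitely many arrows between two vertices. To compute the Ext group I would follow the template of that section: the projective cover of the simple left $C^*$-module $\KK w$ is $P_w=E_l(w)^*$, with $E_l(w)=\mathrm{span}(L(w))$, and dualizing $0\to \KK w\to E_l(w)\to E_l(w)/\KK w\to 0$ yields a presentation $0\to K\to P_w\to \KK w\to 0$ with $K=(E_l(w)/\KK w)^*$. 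Applying $\Hom_{C^*}(-,E_r(v))$ and using projectivity of $P_w$ identifies $\Ext^1_{C^*}(\KK w,E_r(v))$ with $\Coker\big(\Hom_{C^*}(P_w,E_r(v))\to \Hom_{C^*}(K,E_r(v))\big)$. Equivalently, a class is an extension $0\to E_r(v)\to M\to \KK w\to 0$ of left $C^*$-modules, and it vanishes exactly when every such $M$ is rational, since $E_r(v)$ is injective among comodules.

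The content of the theorem is that (Q1) and (Q2) at the vertex $w$ — which index $L(w)$ and hence govern $E_l(w)$ and $P_w$ — force this cokernel to vanish, uniformly in $v$. The role of (Q1) is to supply a scaffold $X(w)$ of paths into $w$ of bounded length $n(w)$; local finiteness alone is not enough, since a vertex may receive arrows from infinitely many sources, so $L_1(E_l(w))$ can be infinite dimensional and \cite[Theorem 4.8]{I2} does not apply directly, and (Q1) is the device that replaces the crude length-one truncation used there. The role of (Q2) is to guarantee that the arrows $M(w)$ into $w$ not already accounted for by the ends of paths in $X(w)$ are uniformly few — finite from each source and bounded by $m(w)$ — so that the complementary contribution to $K$, and hence to $\Hom_{C^*}(K,E_r(v))$, stays under control. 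Combining these: given a putative non-split extension, (Q1) and (Q2) let one cut $M$ down to a finite dimensional rational subquotient carrying the same obstruction, where direct coreflexivity forces splitting; hence every lift of the generator of $\KK w$ is annihilated by a cofinite monomial ideal, $M$ is rational, and the Ext group is $0$.

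The step I expect to be the crux is exactly this last reduction: transporting the finite dimensional closure hypothesis across the infinite dimensional module $E_r(v)$ to conclude rationality of the entire extension. One must manufacture from a non-rational $M$ a genuinely finite dimensional rational pair whose non-split structure contradicts direct coreflexivity, and for this the bookkeeping has to be exact — the decomposition of $L(w)$ into the tails of the bounded-length family $X(w)$ together with the uniformly bounded remainder $M(w)$ must be compatible with the suffix (left comodule) structure of $E_l(w)$, so that the passage from the finite approximations to the full module loses no obstruction. Getting this compatibility right, rather than the homological formalism, is where the real work lies.
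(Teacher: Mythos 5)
Your reduction via \cite[Theorem 3.7]{I2} is set up correctly (the ``if'' direction is indeed immediate, and the problem does reduce to showing $\Ext^1_{C^*}(\KK w,E_r(v))=0$), but the proposal never actually proves this vanishing. The entire content of the theorem is concentrated in the sentence ``(Q1) and (Q2) let one cut $M$ down to a finite dimensional rational subquotient carrying the same obstruction, where direct coreflexivity forces splitting,'' and you yourself flag this as the unproven crux. This is a genuine gap, not a routine verification: direct coreflexivity only controls extensions of \emph{finite dimensional} rational modules by finite dimensional rational ones, while the extension $0\to E_r(v)\to M\to\KK w\to 0$ has an infinite dimensional kernel, and there is no evident way to produce from a non-split (equivalently, non-rational) such $M$ a finite dimensional non-rational extension. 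Indeed, whether direct coreflexivity by itself implies this Ext vanishing is precisely Open Question \ref{q2} of the paper, so any correct argument must convert (Q1) and (Q2) into a concrete mechanism, which your sketch does not supply.

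The paper avoids the Ext computation altogether and runs the argument on the \emph{left} injective hulls instead. For each vertex $v$ it forms $X_v$, the span of the paths in $X(v)$ together with all their subpaths ending at $v$; this is a left subcomodule of $E_l(\KK v)$, and (Q1) bounds its Loewy length by $n(v)$. Condition (Q2) then shows the quotient $E_l(\KK v)/X_v$ has socle $\bigoplus_{x\in M(v)}\KK s(x)$ with each simple occurring with multiplicity at most $m(v)$, hence the quotient embeds in $(\KK Q)^{(m(v))}$ and is finitely cogenerated. With this structural decomposition in hand, the paper invokes \cite[Theorem 4.2]{I2} (a different result from the Theorem 3.7 you use), which says exactly that direct coreflexivity plus this ``finite Loewy length subcomodule with finitely cogenerated quotient'' property of the injective hulls yields closure of rational modules under extensions. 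If you want to salvage your route, you would essentially need to reprove that theorem; as written, your argument establishes only the framework and not the theorem.
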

\begin{proof}
By the aforementioned \cite[Theorem 3.7]{I2}, we just need to prove the only if part. Let $v$ be a vertex of $Q$, and let $X_v$ be the span of the paths in $X(v)$ and their subpaths ending at $v$. Then it is easy to check that $X_v$ is a left subcomodule of $E_l(\KK v)={\rm Span}(L(v))$, the injective hull of the left comodule $\KK v$. Also, $X_v$ has finite Loewy length (coradical filtration), by the first condition. Moreover, $E_l(\KK v)/X_v$ has socle isomorphic to $\bigoplus\limits_{x\in M(v)}(\KK s(x))$, and here the multiplicity of each simple left comodule $\KK w$ equals $|\{x\in M(v)\vert s(x)=w\}|=|S_w|\leq m(v)$. Therefore, the socle of $E_l(\KK v)/X_v$ embeds in $(\KK Q)^{(m(v))}$ by the second condition. Hence, $E_l(\KK v)/X_v$ is finitely cogenerated. As $\KK Q$ is directly coreflexive, we can apply \cite[Theorem 4.2]{I2} and conclude that left rational $(\KK Q)^*$-modules are closed under extensions.
\end{proof}

Since the direct coreflexivity and closure under extensions of rational modules are Morita invariant properties of coalgebras, we easily get the following:

\begin{corollary}\label{c.2.Q12}
Let $C$ be a coalgebra whose simple comodules have trivial endomorphism rings, i.e. $\End^C(S)=\KK$ for all $S\in\Ss$ (for example, this is true when $\KK$ is algebraically closed). If the $\Ext$ quiver $Q$ of $C$ satisfies the conditions (Q1) and (Q2) of the above theorem, and the quiver coalgebra $\KK Q$ is directly coreflexive, then $C$ has a torsion left rational functor (i.e. left rational $C^*$-modules are closed under extensions).  
\end{corollary}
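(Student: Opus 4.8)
The plan is to reduce the statement to Theorem \ref{t.1} by passing to the basic coalgebra of $C$ and realizing it as a subcoalgebra of the path coalgebra $\KK Q$. First I would invoke Corollary \ref{c.Morita}: since the existence of a left torsion Rat functor is Morita--Takeuchi invariant, $C$ has this property if and only if its basic coalgebra $B=eCe$ does. Moreover, since the $\Ext$ quiver depends only on the comodule category (its vertices are the simple comodules and its arrows record $\Ext^1$), the $\Ext$ quiver of $B$ coincides with the $\Ext$ quiver $Q$ of $C$, and likewise $\End^B(S)=\KK$ for every simple $B$-comodule $S$. Thus it suffices to prove the statement with $B$ in place of $C$.

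The key structural observation is that the hypothesis $\End^C(S)=\KK$ forces $B$ to be pointed. Indeed, each simple subcoalgebra of $B$ is the coalgebra of coefficients of some simple comodule $S$; over a field this is the dual of a finite dimensional simple algebra $M_n(D)$, and the condition $\End^B(S)=\KK$ gives $D=\KK$, so the simple subcoalgebra is a comatrix coalgebra $M^c_n(\KK)$ which decomposes as $S^n$ as a comodule. Since $B$ is basic, every simple subcoalgebra is simple as a comodule, which forces $n=1$; hence every simple subcoalgebra of $B$ is one-dimensional and $B$ is pointed. For a pointed coalgebra there is a canonical injective coalgebra morphism $B\hookrightarrow \KK Q$ into the path coalgebra of its $\Ext$ quiver, compatible with the coradical filtrations and with the identification of arrows with $\Ext^1$ used in the definition of $a(v,w)$ (see \cite{DIN1}); here the relevant quiver is precisely $Q$. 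This exhibits $B$ as a subcoalgebra of $\KK Q$.

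With this embedding in hand the remaining steps are formal. By hypothesis $Q$ satisfies (Q1) and (Q2) and $\KK Q$ is directly coreflexive, so Theorem \ref{t.1} yields that $\KK Q$ has a left torsion Rat functor. Applying Proposition \ref{p.sub}(ii) to the subcoalgebra $B$ of $\KK Q$ then shows that $B$ also has a left torsion Rat functor. Finally, transporting this back along the Morita--Takeuchi equivalence via Corollary \ref{c.Morita} gives that $C$ has a left torsion Rat functor, which is the desired conclusion.

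I expect the main obstacle to be the second paragraph: carefully verifying that $\End^C(S)=\KK$ makes the basic coalgebra pointed, and then invoking (or establishing) the embedding of a pointed coalgebra into the path coalgebra of its $\Ext$ quiver with the correct matching of arrow multiplicities to $\dim \Ext^1$. Everything else is a direct chaining of the Morita invariance of Corollary \ref{c.Morita}, the subcoalgebra heredity of Proposition \ref{p.sub}, and Theorem \ref{t.1}.
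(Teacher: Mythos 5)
Your proposal is correct and follows essentially the same route as the paper's own proof: pass to the basic coalgebra, observe it is pointed (this is where the $\End^C(S)=\KK$ hypothesis is used), embed it into $\KK Q$, apply Theorem \ref{t.1} and Proposition \ref{p.sub}, and transport back via the Morita--Takeuchi invariance of Corollary \ref{c.Morita}. The only difference is that you spell out the verification that the basic coalgebra is pointed, which the paper asserts tersely with a citation.
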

\begin{proof}
Let $D$ be the basic coalgebra of $C$. Then $D$ is pointed, so it embeds in the quiver coalgebra $\KK Q$ of its $\Ext$ quiver $Q$ (see \cite{CM,M1}). By the previous Theorem, since $\KK Q$ is directly coreflexive, it has a torsion left Rat functor. Using Proposition \ref{p.sub}, we see that $D$ has a torsion left Rat functor. By Morita invariance, since $C$ is Morita-Takeuchi equivalent to $D$, it follows that $C$ has a left torsion Rat functor.
\end{proof}

The above theorem gives a large class of coalgebras having a torsion Rational functor, provided that one can test that the quiver (path) coalgebra if its Ext quiver is (directly) coreflexive. The results can likely be extended at least to coalgebras for which $C_0$ is separable, but we refrain from going into overly technical details, and leave that to the interested reader. In general, however, coreflexivity is not easily tested. \cite[Proposition 5.4]{DIN1} gives a sufficient condition for a path coalgebra of a quiver $Q$ to be coreflexive, for general enough quivers, namely, if between every two vertices of $Q$ there are only finitely many paths. We will show that this condition is also enough for the Rat functor to be a torsion functor. We show something a little more general in what follows. 

Let $Q$ be a quiver. Let $H_0$ be a set of paths in $Q$, and let $H=\overline{H_0}$ be the set of all paths in $Q$ that are subpaths of some path in $H_0$, i.e. $H$ is the set of all subpaths of paths in $H_0$. Let $C$ be the coalgebra whose basis is given by all the paths in $H=\overline{H_0}$ (it is easy to see that this is a subcoalgebra of the path coalgebra of $Q$; equivalently, one can start with a set $H$ of paths in $Q$ which is closed under taking subpaths). Such a subcoalgebra of the path coalgebra of $Q$ which a basis of paths is called a monomial coalgebra (or path subcoalgebra, see \cite{DIN2}). For each path $p\in H$, let $p^*$ denote the element of $C^*$ for which $p^*(q)=\delta_{pq}$, for each $q\in H$. Obviously, $C^*\cong\prod\limits_{p\in H}\KK p^*$ as vector spaces, so elements of $C^*$ can be written as families $(\alpha_p)_{p\in H}$. It is easy to see that under this identification, the multiplication of $C^*$ becomes

$$(\alpha_p)_{p\in H} * (\beta_q)_{q\in H} = (\sum\limits_{s=pq})\alpha_p\beta_q$$

where for each $s\in H$, the sum is taken over all factorizations $s=pq$ of the path $s$ into concatenation of subpaths. This is obviously a generalization of the formal power series algebra, which is the dual of the path coalgebra of the one-vertex-one arrow quiver 
\vspace{.2cm}

$$\xymatrix{\bullet\ar@(ul,ur)[]}$$ (the divided power coalgebra). 
We can now prove the following. 

\begin{theorem}\label{t.2.bound}
With the above notations, assume that for every two vertices $v,w$ in $Q$, the length of paths in $H$ which start at $v$ and end at $w$ is bounded (by some number $N(v,w)$ depending on $v$ and $w$). Then the monomial coalgebra $C$ with basis $H$ is directly coreflexive if and only if rational left (equivalently, right) $C^*$-modules are closed under extensions.
\end{theorem}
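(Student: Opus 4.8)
The reverse implication is immediate from the definitions: if all rational left $C^*$-modules are closed under extensions, then so are the finite dimensional ones, which is exactly direct coreflexivity. The content is the forward implication, which I would prove through the homological criterion \cite[Theorem 3.7]{I2}: $C$ has a left torsion Rat functor if and only if (a) its finite dimensional rational modules are closed under extensions, and (b) $\Ext^1_{C^*}(T,E)=0$ for every simple right comodule $T$ and every injective indecomposable right comodule $E$. Assuming $C$ directly coreflexive, condition (a) is precisely the hypothesis, so everything reduces to establishing (b).

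Before attacking (b) I would record the two finiteness inputs. First, a directly coreflexive coalgebra is locally finite (\cite[Proposition 1.3, Proposition 2.1]{I2}), so $a(x,y)=\dim\Ext^1_C(\KK y,\KK x)<\infty$ for all vertices $x,y$; equivalently, between any two vertices there are only finitely many arrows. Second, by hypothesis the lengths of the paths of $H$ joining any two fixed vertices are bounded. Neither input forbids \emph{infinitely many} paths between two vertices (the quiver with arrows $w\to a_i\to v$ for infinitely many intermediate $a_i$ is locally finite and of bounded length, yet has infinitely many paths $w\to v$), so the argument cannot proceed by naive counting of paths.

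The core is the vanishing of $\Ext^1_{C^*}(\KK w, E_r(v))$ for vertices $w,v$. Since $E_r(v)$ is injective as a right $C$-comodule, $\Ext^1_C(\KK w, E_r(v))=0$, so every \emph{rational} extension already splits; it therefore suffices to prove that every extension $0\to E_r(v)\to M\to \KK w\to 0$ of left $C^*$-modules is rational. Writing $M=E_r(v)\oplus \KK w\, m$, the extension is recorded by a cocycle $\phi\colon C^*\to E_r(v)$ with $\phi(fg)=f\cdot\phi(g)+g(w)\phi(f)$, and $M$ is rational exactly when $\phi$ is continuous for the finite topology. Expanding $\phi(f)=\sum_q \phi_q(f)\,q$ over the basis of $E_r(v)$ given by paths $q$ starting at $v$, the left $C^*$-action truncates paths from the right, so the cocycle identity couples the coefficient $\phi_q$ at $q$ to the coefficients $\phi_{q'}$ at its proper right extensions $q'$, through the ``middle'' comultiplication terms recording the factorizations of $q'$ through intermediate vertices. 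I would run this coupling as an induction on length: boundedness of path lengths makes it terminate at the longest paths, where the cocycle reduces to an extension problem between two simples and local finiteness (finitely many arrows between the two vertices) rules out the non-continuous cocycles of thick arrow type; propagating back down through the middle terms then keeps every $\phi_q$ continuous and nonzero on any given argument only for finitely many $q$.

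The step I expect to be the genuine obstacle is exactly this rigidification. In the thick arrow quiver the arrows admit no nontrivial factorizations, the middle terms vanish, and this is precisely what produces the non-continuous cocycles with $\Ext^1\neq 0$ of dimension $|\KK|^{|\KK|^{\NN}}$ computed above; the two hypotheses enter in a delicate, intertwined way — bounded length to terminate the recursion and local finiteness to prevent the $\prod$-versus-$\bigoplus$ blow-up of the kind $\Hom(S^{\NN},S)=(\KK^{\NN})^*$ seen there. Making the bookkeeping uniform across the (possibly infinitely many, each of finite multiplicity) simple constituents of the deeper coradical layers of $E_r(v)$, and verifying the recursion genuinely lands in the continuous part rather than merely shuffling coefficients upward, is the technical heart. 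As an alternative packaging one could instead invoke \cite[Theorem 4.2]{I2}, constructing for each $v$ a finite Loewy length subcomodule $X_v\subseteq E_l(\KK v)$ with $E_l(\KK v)/X_v$ finitely cogenerated by truncating $E_l(\KK v)$ at a suitable length and adjoining the maximal paths of the cut layer; but controlling the multiplicities of the resulting socle meets the same difficulty.
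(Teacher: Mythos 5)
Your reduction is fine as far as it goes: the reverse implication is indeed immediate, and by \cite[Theorem 3.7]{I2} the forward implication amounts to the vanishing $\Ext^1_{C^*}(T,E)=0$ for simple $T$ and indecomposable injective $E$ (given direct coreflexivity). But the proposal stops exactly where the theorem begins. The cocycle induction that is supposed to establish this vanishing is only sketched, and you yourself flag its crucial step as unresolved; that step is the entire content. Worse, the heuristic meant to close it --- that at the top of the recursion ``local finiteness rules out non-continuous cocycles of thick arrow type'' --- is undermined by your own example: in the quiver with arrows $w\to a_i\to v$ ($i\in\NN$), which is locally finite with path lengths bounded by $2$, the third Loewy layer of $E_r(\KK w)$ is $(\KK v)^{(\NN)}$, a single simple with infinite multiplicity. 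Finiteness of arrow sets controls only the \emph{second} Loewy layer of an injective; the deeper layers, where your induction is supposed to terminate, can still exhibit exactly the $\prod$-versus-$\bigoplus$ configuration ($\Hom(S^{\NN},S)=(\KK^{\NN})^*$) that produced the nonzero $\Ext$ in the thick arrow quiver. Note also that your core argument uses direct coreflexivity only through its consequence, local finiteness; a completed proof along these lines would settle a bounded-length monomial case of Open Question \ref{q1}, which the paper leaves open --- a strong sign that the difficulty you defer is genuine and not bookkeeping.

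The paper's proof avoids computing any $\Ext$ group. Assuming $C$ directly coreflexive but $Rat({}_{C^*}\Mm)$ not closed under extensions, \cite[Proposition 3.3]{I2} (not Theorem 3.7) yields an exact sequence $0\to M\to P\to S\to 0$ with $P$ cyclic non-rational, $S=\KK b$ simple, and $M$ rational of infinite Loewy length with simple socle $T=\KK a$ and $JM=M$. Then $P$ is local, hence a quotient $\varphi$ of the projective cover $E_l(\KK b)^*=C^*b^*$. One picks $x\in M_{n+1}\setminus M_n$ with $n=N(a,b)$ and writes $x=f\cdot\varphi(b^*)$ with $f\in C^*b^*$. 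The boundedness hypothesis enters through a one-line support computation in the monomial description of $C^*$: every element of $C_n^\perp f$ is supported on paths of length $>n$ ending at $b$, hence on no path starting at $a$, so $a^\perp C_n^\perp f=C_n^\perp f$. Applying this to $\varphi(b^*)$ gives $C_n^\perp x=a^\perp C_n^\perp x=0$ (since $C_n^\perp x$ lies in the socle $\KK a$, which $a^\perp$ annihilates), contradicting $x\notin M_n$. So the two hypotheses are used quite differently than in your sketch: direct coreflexivity is spent on extracting the minimal counterexample, and boundedness kills it by a support argument in the dual algebra, with no layer-by-layer analysis of injectives at all.
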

\begin{proof}
Assume $C$ is directly coreflexive, but $Rat({}_{C^*}\Mm)$ is not closed under extensions. By \cite[Proposition 3.3]{I2}, there is an exact sequence $0\rightarrow M\rightarrow P\rightarrow S\rightarrow 0$ of left $C^*$-modules such that:\\
$\bullet$ $P$ is cyclic and not rational.\\
$\bullet$ $S=\KK b$ is simple rational.\\
$\bullet$ $M$ is rational, has infinite Loewy length, has simple socle $T=\KK a$ and $JM=M$, where $J=Jac(C^*)$.\\
Obviously, $P$ is local since $M=JM\subseteq JP\subseteq M$. Let $\varphi:E_l(\KK b)^*\rightarrow P$ be an epimorphism (which exists since $E_l(\KK b)^*$ is the projective cover of the left simple module $S$; see e.g. \cite[Lemma 1.4]{I}). Note that $E_l(\KK b)^*=C^* b^*$, and $P=\varphi(C^*b^*)=\varphi(C^*b^*\cdot b^*)=C^*b^*P$. Let $n=N(a,b)$, and let $x\in M$ be such that $x\in M_{n+1}\setminus M_n$. Since $P=C^*b^*\cdot P$, there is $f\in C^*b^*$ such that $f\cdot \varphi(b^*)=x$ ($\varphi(b^*)$ generates $P$). Then $C_n^\perp x$ has Loewy length 0 (i.e. it is simple), so $S=C_n^\perp x\neq 0$, and $a^\perp  C_n^\perp x=0$ ($a^\perp = (\KK a)^\perp$). We show however that $a^\perp \cdot C_n^\perp f=C_n^\perp f$, which will show in turn that $0=a^\perp C_n^\perp f\cdot \varphi(b^*)= C_n^\perp f\cdot \varphi(b^*)=C_n^\perp x$, which is a contradiction.\\
Since $f=fb^*$, we see that $f=(\lambda_p)_{p\in H}$ such that $\lambda_p=0$ if $p$ does not end at $b$. Recall that, $C_n^\perp$ consists of elements $(\alpha_q)_q$ such that $\alpha_q=0$ if the length of $q$ satisfies $|q|\leq n$. Then for every element $c^*=(\alpha_q)_q\in C_n^\perp$, we have $$c^*f=(\gamma_s)_{s\in H}=(\sum\limits_{qp=s,\,|q|>n,\, t(p)=b}\alpha_q\lambda_p)_{s\in H}$$ 
In the above formula, we see that $\gamma_s=0$ if $s$ does not end at $b$. Also, $\gamma_s=0$ if $|s|\leq n$. Therefore, $\gamma_s=0$ if $s$ starts at $a$, since if $s$ starts at $a$ and ends at $b$, it has $|s|\leq n$ by hypothesis. This shows that $a^*\cdot (\gamma_s)_{s\in H}=0$. Hence, $(\varepsilon-a^*) \cdot (\gamma_s)_{s\in H}=(\gamma_s)_{s\in H}$. Since $\varepsilon - a^*\in a^\perp$, this shows that $c^*f=(\varepsilon-b^*) c^*f\in a^\perp C_n^\perp f$. Therefore, $a^\perp C_n^\perp f= C_n^\perp f$. As noted above, this ends the proof. 
\end{proof}

A special type of quiver $Q$ with the above property is considered in \cite{DIN1}, namely, one for which between any two vertices of $Q$ there are only finitely many paths. It is proved in \cite[Proposition 5.4]{DIN1} that the path coalgebra $C$ of such a quiver $Q$ is coreflexive if and only if $C_0$ is coreflexive. One sees that, in fact, by the proof \cite[Theorem 5.2]{DIN1} and \cite[Proposition 5.3]{DIN1}, such a coalgebra is, in fact, directly coreflexive. That method applies to monomial coalgebras too:

\begin{theorem}\label{t.2.fin}
Let $Q$ be a quiver, and $H$ a set of paths of $Q$, closed under taking subpaths and such that between every two vertices $v,w$ of $H$ there are only finitely many paths in $H$ that start at $v$ and end at $w$. Then the monomial coalgebra $C$ (path subcoalgebra of $\KK Q$) with basis $H$ is directly coreflexive, and left (and also right) rational $C^*$-modules are closed under extensions in the category of all left (or right) $C^*$-modules.
\end{theorem}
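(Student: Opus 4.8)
The plan is to reduce everything to \emph{direct coreflexivity} and then quote Theorem~\ref{t.2.bound}. First I would note that the present hypothesis is stronger than that of Theorem~\ref{t.2.bound}: if between any two vertices $v,w$ there are only finitely many paths of $H$, then in particular the set of their lengths is finite, hence bounded by some $N(v,w)$. So Theorem~\ref{t.2.bound} applies, and it suffices to prove that $C$ is directly coreflexive; closure under extensions of left (and right) rational $C^*$-modules then follows automatically. Thus from now on I only have to show that finite dimensional rational $C^*$-modules are closed under extensions.

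Second, I set up the combinatorial wedge. Take an exact sequence $0\to M'\to M\to M''\to 0$ of left $C^*$-modules with $M',M''$ finite dimensional rational. Each is a finite dimensional comodule, so its coefficient space lies in a finite subpath-closed set of paths; enlarging, I may assume $M'$ is a $D$-comodule and $M''$ an $E$-comodule, where $D={\rm Span}(\mathcal D)$, $E={\rm Span}(\mathcal E)$ are \emph{monomial} finite dimensional subcoalgebras given by finite subpath-closed sets $\mathcal D,\mathcal E$. The first key step is that the wedge $D\wedge E$ is finite dimensional. Here I would check that $D\wedge E$ is again monomial and that a path $s$ lies in it precisely when every factorization $s=pq$ has $p\in\mathcal D$ or $q\in\mathcal E$; an elementary analysis then shows each such $s$ factors as $s=u\,a\,w$ with $u\in\mathcal D$, $w\in\mathcal E$ and $a$ a single arrow (together with the degenerate cases $s\in\mathcal D$ or $s\in\mathcal E$). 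Consequently every path of $D\wedge E$ starts and ends at a vertex of the finite set $V_0=V_{\mathcal D}\cup V_{\mathcal E}$ of vertices occurring in $\mathcal D,\mathcal E$, and by the hypothesis there are only finitely many such paths. Hence $D\wedge E$ is finite dimensional, so $(D\wedge E)^\perp$ is a closed cofinite (open) ideal of $C^*$.

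Third, I would prove that $(D\wedge E)^\perp$ annihilates $M$, which exhibits $M$ as a $(D\wedge E)$-comodule and so shows $M$ is rational. Fix a linear splitting $M=M'\oplus M''$; the module structure is encoded by a $1$-cocycle $\theta\colon C^*\to\Hom_\KK(M'',M')$ with $\theta(\phi\ast\psi)=\phi\cdot\theta(\psi)+\theta(\phi)\cdot\psi$, where $\phi$ acts on the left through $M'$ and $\psi$ on the right through $M''$. The heart of the argument is the vanishing $\theta(s^*)=0$ for every path $s\notin D\wedge E$. To see this I write $s=a_1\cdots a_k$, use $s^*=a_1^*\ast\cdots\ast a_k^*$, and expand $\theta$ by the Leibniz rule into $\sum_i (a_1\cdots a_{i-1})^*\!\cdot\theta(a_i^*)\cdot (a_{i+1}\cdots a_k)^*$. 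A prefix-dual whose prefix is not in $\mathcal D$ annihilates the $D$-comodule $M'$ from the left, and a suffix-dual whose suffix is not in $\mathcal E$ annihilates the $E$-comodule $M''$ from the right; since $s\notin D\wedge E$ admits a factorization $s=pq$ with $p\notin\mathcal D$ and $q\notin\mathcal E$, the subpath-closedness of $\mathcal D,\mathcal E$ forces every summand to contain either such a prefix or such a suffix, so each summand, and hence $\theta(s^*)$, vanishes.

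Finally I would combine this with a corner reduction. Let $e_{\mathcal D}=\sum_{v\in V_{\mathcal D}}v^*$ and $e_{\mathcal E}=\sum_{w\in V_{\mathcal E}}w^*$ be the idempotent sums of the vertex functionals occurring in $\mathcal D,\mathcal E$; these act as the identity on $M'$ and $M''$ respectively. Since $D,E\subseteq D\wedge E$, any $\phi\in(D\wedge E)^\perp$ lies in $D^\perp\cap E^\perp$, so $\phi$ annihilates both $M'$ and $M''$; feeding this into the cocycle identity twice gives $\theta(\phi)=\theta(e_{\mathcal D}\ast\phi\ast e_{\mathcal E})$. Now $e_{\mathcal D}\ast\phi\ast e_{\mathcal E}$ is just $\phi$ restricted to the paths starting in $V_{\mathcal D}$ and ending in $V_{\mathcal E}$, and by the finiteness hypothesis this is a \emph{finite} combination $\sum_s \phi(s)\,s^*$. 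Applying the linear map $\theta$ termwise, each term vanishes: if $s\in D\wedge E$ then $\phi(s)=0$, while if $s\notin D\wedge E$ then $\theta(s^*)=0$ by the previous step. Hence $\theta(\phi)=0$, so $(D\wedge E)^\perp M=0$ and $M$ is rational. This proves direct coreflexivity, and Theorem~\ref{t.2.bound} finishes the proof. I expect the vanishing $\theta(s^*)=0$ off the wedge to be the main obstacle; the decisive role of the finiteness hypothesis is that it reduces $\theta(\phi)$ to a \emph{finite} sum of path-duals, so that linearity alone---with no appeal to continuity of $\theta$---yields the conclusion.
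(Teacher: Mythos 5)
Your proposal is correct, but it takes a genuinely different route from the paper's. The paper's own proof of Theorem~\ref{t.2.fin} is essentially a citation argument: it observes that the proofs of \cite[Theorem 5.2]{DIN1} and \cite[Proposition 5.3]{DIN1} carry over from path coalgebras to monomial coalgebras, which yields direct coreflexivity of $C$, and then invokes Theorem~\ref{t.2.bound} --- exactly the reduction you also make (your remark that ``finitely many paths'' forces ``bounded length'', so Theorem~\ref{t.2.bound} applies, is the same step). What you do differently is to replace the appeal to \cite{DIN1} by a self-contained proof of direct coreflexivity: you show that the wedge $D\wedge E$ of two finite dimensional monomial subcoalgebras is again monomial and, via the factorization $s=uaw$ with $u\in\mathcal{D}$, $a$ an arrow, $w\in\mathcal{E}$, that its paths start and end in a fixed finite vertex set, so $D\wedge E$ is finite dimensional by the hypothesis; you then verify directly that $(D\wedge E)^\perp$ annihilates the extension $M$. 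That last verification is the real content, since a priori $M$ is only annihilated by the (possibly non-closed) product of ideals $D^\perp E^\perp$, and the whole difficulty of direct coreflexivity is the gap between this product and its closure $(D\wedge E)^\perp$. Your cocycle computation together with the corner reduction $\theta(\phi)=\theta(e_{\mathcal{D}}\ast\phi\ast e_{\mathcal{E}})=\sum_{s}\phi(s)\theta(s^*)$ (a finite sum, by hypothesis) bridges exactly that gap. The trade-off: the paper's proof is short but asks the reader to re-run the arguments of \cite{DIN1} in the monomial setting; yours is longer but fully explicit and shows precisely where the finiteness hypothesis enters (twice: finite dimensionality of the wedge, and finiteness of the corner sum).

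One small imprecision, which is fixable and does not affect the conclusion: your Leibniz expansion of $\theta(s^*)$ uses only the arrow duals $a_1^*,\dots,a_k^*$, and the claim that every summand contains a prefix outside $\mathcal{D}$ or a suffix outside $\mathcal{E}$ can fail when the only witness factorizations $s=pq$ with $p\notin\mathcal{D}$, $q\notin\mathcal{E}$ are degenerate ($p$ or $q$ a vertex); for a single arrow $a\notin D\wedge E$, for instance, the expansion is vacuous. Two repairs are available. Either expand $s^*=s(s)^*\ast a_1^*\ast\cdots\ast a_k^*\ast t(s)^*$, including the vertex duals, after which every summand does acquire an annihilating prefix or suffix factor in all cases; or, more simply, note that after your corner reduction you only need $\theta(s^*)=0$ for paths $s$ starting in $V_{\mathcal{D}}$ and ending in $V_{\mathcal{E}}$, and for such $s$ (whose source lies in $\mathcal{D}$ and target in $\mathcal{E}$, by subpath-closedness) every witness factorization is automatically non-degenerate, so your argument applies exactly as written.
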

\begin{proof}
The proof of \cite[Theorem 5.2]{DIN1} shows that a coalgebra with the property stated in that theorem is directly coreflexive. Moreover, the proof of \cite[Proposition 5.3]{DIN1}, which uses \cite[Theorem 5.2]{DIN1}, applies for monomial coalgebras too, and therefore shows that $C$ is directly coreflexive. Applying the previous Theorem \ref{t.2.bound}, we see that rational $C^*$-modules are closed under extensions.
\end{proof}

\begin{corollary}
Let $C$ be a coalgebra such that every simple rational module is endotrivial (i.e. $\End(S)=\KK$ for every such $S$; for example, when $\KK$ is algebraically closed). \\
(i) Assume that the path coalgebra of the $\Ext$ quiver $Q$ of $C$ is directly coreflexive (respectively, has rational torsion functor). Then $C$ is directly coreflexive (respectively, has rational torsion functor).\\
(ii) Assume that in the $\Ext$ quiver of $C$ there are finitely many paths between any two vertices. Then $C$ is coreflexive and has a torsion rational functor (i.e. rational modules are closed under extensions).
\end{corollary}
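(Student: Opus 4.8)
The plan is to reduce both parts to the results on path coalgebras already proved, by passing to the basic coalgebra and exploiting the two transfer principles established above: stability under subcoalgebras (Proposition~\ref{p.sub}) and Morita--Takeuchi invariance (Corollary~\ref{c.Morita}). Write $D=eCe$ for the basic coalgebra of $C$. The endotriviality hypothesis $\End(S)=\KK$ for every simple rational comodule $S$ says precisely that the simple $D$-comodules are one-dimensional, i.e. $D$ is pointed; hence, by \cite{CM,M1}, $D$ embeds as a subcoalgebra into the path coalgebra $\KK Q$ of its $\Ext$ quiver, which coincides with the $\Ext$ quiver of $C$ (the $\Ext$ quiver being a Morita--Takeuchi invariant). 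This is exactly the situation handled in the proof of Corollary~\ref{c.2.Q12}, and both parts will follow by feeding the appropriate input into that scheme.

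For part (i) I would argue as follows. Suppose $\KK Q$ is directly coreflexive. Since $D\hookrightarrow\KK Q$ is a subcoalgebra, Proposition~\ref{p.sub}(i) gives that $D$ is directly coreflexive, and since $C$ is Morita--Takeuchi equivalent to $D$, Corollary~\ref{c.Morita} yields that $C$ is directly coreflexive. The torsion-functor statement is identical: if $\KK Q$ has a left torsion Rat functor, then Proposition~\ref{p.sub}(ii) transfers this to the subcoalgebra $D$, and Corollary~\ref{c.Morita} transfers it further to $C$. Thus (i) is a direct assembly of the two transfer results applied to the embedding $D\hookrightarrow\KK Q$.

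For part (ii) I would first apply Theorem~\ref{t.2.fin} to the full path coalgebra $\KK Q$, taking $H$ to be the set of all paths of $Q$: the hypothesis that there are finitely many paths between any two vertices is exactly what that theorem requires, so $\KK Q$ is directly coreflexive and its left (and right) rational modules are closed under extensions. Part (i) then immediately transports both conclusions to $C$. It remains to upgrade direct coreflexivity to full coreflexivity. For the path coalgebra this is supplied by \cite[Proposition~5.4]{DIN1}: under the finitely-many-paths hypothesis $\KK Q$ is coreflexive if and only if its coradical $(\KK Q)_0$ is coreflexive; and $(\KK Q)_0$ is the grouplike coalgebra on the vertex set, which is coreflexive over any infinite field (in particular over an algebraically closed $\KK$, as in the stated example) by \cite[3.7.5~Theorem]{HR}. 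Hence $\KK Q$ is coreflexive, and coreflexivity passes to the subcoalgebra $D$ by \cite[Proposition~6.4]{T1} and then to $C$ by Morita--Takeuchi invariance (Corollary~\ref{c.Morita}).

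The genuinely non-formal point --- and the one I expect to be the main obstacle --- is the coreflexivity of the coradical $(\KK Q)_0$, the grouplike coalgebra on the vertices. Everything else is a mechanical concatenation of Proposition~\ref{p.sub}, Corollary~\ref{c.Morita}, Theorem~\ref{t.2.fin} and the identification of the $\Ext$ quiver of $C$ with that of its pointed basic coalgebra $D$. The coradical is where the size of the field enters: over a finite field with infinitely many vertices the dual $\KK^I$ acquires cofinite maximal ideals coming from non-principal ultrafilters whose residue field is again $\KK$, so such a coradical fails to be coreflexive; the appeal to \cite{HR} over an infinite field is precisely what rules this out and makes the coreflexivity conclusion in (ii) valid in the intended setting. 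A secondary point to verify is that direct coreflexivity and the torsion condition, although a priori formulated inside the full module category, are correctly recognized as conditions on the localizing subcategory $\Ll(C)$ so that Lemma~\ref{l.eq} applies; this has, however, already been arranged in the course of proving Corollary~\ref{c.Morita}.
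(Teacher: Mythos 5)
Your proof is correct and follows essentially the same route as the paper: part (i) is verbatim the paper's argument (the pointed basic coalgebra $D$ embeds in $\KK Q$, then Proposition \ref{p.sub} transfers the property to $D$ and Corollary \ref{c.Morita} transfers it to $C$), and part (ii) is the paper's one-line reduction to Theorem \ref{t.2.fin} via (i). Your explicit handling of the upgrade from direct coreflexivity to full coreflexivity in (ii) (via \cite[Proposition 5.4]{DIN1}, the coreflexivity of the grouplike coradical over an infinite field from \cite[3.7.5 Theorem]{HR}, \cite[Proposition 6.4]{T1} for subcoalgebras, and Corollary \ref{c.Morita}) fills in a step the paper leaves implicit, and your remark that this step genuinely requires the coradical to be coreflexive --- hence an infinite field, which endotriviality of the simples alone does not guarantee --- is a legitimate caveat about the statement as written, not a flaw in your argument.
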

\begin{proof}
(i) Let $D$ be the basic coalgebra of $C$, which is pointed and embeds in $\KK Q$. Therefore, $D$ is directly coreflexive (respectively has rational torsion functor) by Proposition \ref{p.sub}, and by Morita-Takeuchi invariance of Corollary \ref{c.Morita}, the same follows for $C$ is too. \\
(ii) This follows as (i), using the above Theorem \ref{t.2.fin}.
\end{proof}

\begin{remark}
We note that the above results constitute further evidence to support an affirmative answer to Question \ref{q2}, and would motivate one to formulate it as a conjecture. At the same time, if one is to find an example of a coalgebra which is directly coreflexive and does not have a left torsion rational functor, it is natural to look for examples coming from subcoalgebras of quiver coalgebras (by Morita invariance), and Theorem \ref{t.1} gives hints on how such a counterexample to this conjecture of Question 2 should look like.
\end{remark}

\bigskip\bigskip\bigskip

\begin{center}
\sc Acknowledgment
\end{center}
The research was supported by the UEFISCDI Grant
PN-II-ID-PCE-2011-3-0635, contract no. 253/5.10.2011 of CNCSIS.
The research was also supported by  the
strategic grant POSDRU/89/1.5/S/58852, Project ``Postdoctoral
program for training scientific researchers'' cofinanced by the
European Social Fund within the Sectorial Operational Program
Human Resources Development 2007-2013.

\bigskip\bigskip\bigskip


\begin{thebibliography}{99}


\bibitem{abe}
E.~Abe, Hopf Algebras, Cambridge University Press, Cambridge,
1977.

\bibitem{CM}
W. Chin and S. Montgomery, \emph{Basic Coalgebras}, in Modular Interfaces (Riverside, CA, 1995) AMS/IP Studies in Advanced Math. vol.4, Providence RI, (1997) 41--47.

\bibitem{chin}
W.~Chin, M.~Kleiner, D.~Quinn, \textit{Almost split sequences for
comodules}, J. Algebra {\bf 249} (2002), 1--19.

\bibitem{C0}
J. Cuadra, \emph{Extensions of rational modules.}, Int. J. Math. Math. Sci. 2003, no. 69, 4363–-4371. 

\bibitem{C} J. Cuadra, \emph{When does the rational submodule split off?}, Ann. Univ. Ferrarra -Sez. VII- Sc. Mat. Vol. LI (2005), 291--298.


\bibitem{CNO} J. Cuadra, C. N\u ast\u asescu, F. Van Oystaeyen, \emph{Graded almost noetherian rings and applications to coalgebras}, J. Algebra {\bf 256} (2002) 97-–110.

\bibitem{DNR} S.~D\u asc\u alescu, C.~N\u ast\u asescu, \c S.~Raianu, Hopf algebras.
An introduction, Marcel Dekker, New York, 2001.

\bibitem{DIN1} S.~D\u asc\u alescu, M.C.~Iovanov, C.~N\u ast\u
asescu, \textit{Quiver algebras, path coalgebras, and
coreflexivity}, Pac. J. Math 262, No.1 (2013), 49--79.; preprint arXiv.

\bibitem{DIN2} 
S.~D\u asc\u alescu, M.C.~Iovanov, C.~N\u ast\u asescu\textit{Path subcoalgebras,  finiteness properties and quantum groups}, to appear, J. Noncommutative Geometry; preprint arXiv.

\bibitem{NT1}
J. G$\rm\acute{o}$mez-Torrecillas, C. N\u ast\u asescu, \emph{Quasi-co-Frobenius coalgebras}, J. Algebra 174 (1995), 909--923.


\bibitem{HR}
R.~Heyneman, D.E.~Radford, \textit{Reflexivity and Coalgebras of
Finite Type}, J. Algebra {\bf 28} (1974), 215--246.

\bibitem{I}
M.C.~Iovanov, \emph{Co-Frobenius Coalgebras}, J. Algebra 303
(2006), no. 1, 146--153.

\bibitem{I1}
M.C.~Iovanov, \textit{Generalized Frobenius Algebras and Hopf
Algebras}, preprint arXiv, to appear, Can. J. Math. (2013); online
http://dx.doi.org/10.4153/CJM-2012-060-7

\bibitem{I2}
M.C.~Iovanov, \textit{On Extensions of Rational Modules}, preprint
arxiv, to appear, Israel J. Math. 


\bibitem{I3}
M.C.~Iovanov, \emph{The Splitting Problem for Coalgebras: A Direct Approach}, Applied Categorical Structures 14 (2006) - Categorical Methods in Hopf Algebras - no. 5-6, 599--604.

\bibitem{I4}
M.C.~Iovanov, \emph{When does the rational torsion split off for finitely generated modules}, Algebr. Represent. Theory 12 (2009), no. 2-5, 287--309.

\bibitem{I5}
M.C.~Iovanov, C. N\u ast\u asescu, B. Torrecillas-Jover, \emph{The Dickson subcategory splitting conjecture for pseudocompact algebras},  J. Algebra  320  (2008),  no. 5, 2144--2155.

\bibitem{J}
N. Jacobson, {\it Lectures in Abstract Algebra. Volume 2: Linear algebra}, Springer-Verlag, 1951, Berlin-
Heidelberg-New York.

\bibitem{L}
B.I-Peng Lin, \emph{Semiperfect coalgebras}, J. Algebra {\bf 30}
(1974), 559--601.

\bibitem{L2}
B.I-P. Lin, \emph{Products of torsion theories and applications to coalgebras}, Osaka J. Math. 12 (1975) 433–-439.

\bibitem{mo} S.~Montgomery,  Hopf algebras and their actions
on rings, {\sl CBMS Reg. Conf. Series} {\bf 82}, American
Mathematical Society, Providence, 1993.

\bibitem{M1}
S. Montgomery, \emph{Indecomposable coalgebras, simple comodules, and pointed Hopf algebras}, Proc. Amer. Math. Soc. 123 (1995) 2343–-2351.

\bibitem{NT}
C. N\u ast\u asescu, B. Torrecillas, \emph{The Splitting Problem for Coalgebras}, J. Algebra 281 (2004) 144–-149.

\bibitem{RAD}
D. E. Radford, \emph{Hopf Algebras}, Volume 49 of K \& E series on knots and everything, World Scientific, 2011, 584pp.

\bibitem{R}
D.E.~Radford, \textit{On the Structure of Ideals of the Dual
Algebra of a Coalgebra}, Trans. Amer. Math. Soc. Vol. 198 (Oct
1974), 123--137.

\bibitem{Rad}
D.E.~Radford, \textit{Coreflexive coalgebras}, J. Algebra {\bf 26}
(1973), 512--535.

\bibitem{Rad2}
D. E. Radford, \emph{On the structure of pointed coalgebras}, J. Algebra {\bf 77} (1982), 1–-14.

\bibitem{Sh}
T. Shudo, \emph{A note on coalgebras and rational modules}, Hiroshima Math. J. 6 (1976) 297–-304.

\bibitem{simson}
D.~Simson, \textit{Incidence coalgebras of intervally finite posets, their integral quadratic forms and comodule categories}, Colloq. Math. {\bf 115} (2009), 259--295.



\bibitem{Sw} M.E.~Sweedler, Hopf Algebras, Benjamin, New York, 1969.


\bibitem{T1}
E.J.~Taft, \textit{Reflexivity of Algebras and Coalgebras},
American Journal of Mathematics, Vol. 94, No. 4 (Oct 1972),
1111--1130.

\bibitem{T2}
E.J.~Taft, \textit{Reflexivity of algebras and coalgebras. II.}
Comm. Algebra  {\bf 5}  (1977), no. 14, 1549--1560.

\bibitem{TT}
M.L. Teply, B. Torrecillas, \emph{Coalgebras with a radical rational functor}, J. Algebra 290 (2005), 491–-502.





\end{thebibliography}
\end{document}